\theoremstyle{plain}
\newtheorem{theorem}{Theorem}[section]
\newtheorem{corollary}{Corollary}
\newtheorem{lemma}{Lemma}
\theoremstyle{definition}
\newtheorem{definition}{Definition}[section]
\theoremstyle{remark}
\newtheorem{remark}{Remark}
\newcommand{\mc}{\mathcal}
\newcommand{\R}{\mathbb{R}}
\newcommand{\N}{\mathbb{N}}
\newcommand{\Z}{\mathbb{Z}}
\newcommand{\K}{\mc{K}}
\newcommand{\inner}[2]{< #1,#2>}
\newcommand{\cochain}[1]{\mathbf #1}
\newcommand{\torsion}[1]{\bar{#1}^{\,*}}
\newcommand{\rank}{\mathrm{dim}\,}
\newcommand{\dual}[1]{\tilde{#1}}
\newcommand{\abs}[1]{|{#1}|}
\newcommand{\eqclassspan}[1]{\langle{[#1]}\rangle}
\newcommand{\ha}{\mathrm{ha}}
\newcommand{\ho}{\mathrm{ho}}
\newcommand{\co}{\mathrm{co}}
\let\div\relax
\DeclareMathOperator{\grad}{grad}
\DeclareMathOperator{\div}{div}
\DeclareMathOperator{\curl}{curl}
\author{
\normalsize{
}}
\date{}
\title{{\textbf{\large{Generators of $H^1(\Gamma, \partial \Gamma^c)$ with $\partial \Gamma^c \subset \partial \Gamma$ for Triangulated Surfaces $\Gamma$: Construction and Classification of Global Loops}}}}
\date{}
\author{{Silvano Pitassi \thanks{IMAG, University of Montpellier, Montpellier, France (silvano.pitassi@umontpellier.fr)}}}
\begin{document}
\maketitle
\begin{abstract}
\noindent
Given a compact surface $\Gamma$ embedded in $\R^3$ with boundary $\partial \Gamma$, our goal is to construct a set of representatives for a basis of the relative cohomology group $H^1(\Gamma, \partial \Gamma^c)$, where $\Gamma^c$ is a specified subset of $\partial \Gamma$.
To achieve this, we propose a novel graph-based algorithm with two key features: it is applicable to non-orientable surfaces, thereby generalizing the construction of Hiptmair and Ostrowski [SIAM J. Comput., 31 (2002)], and it has a worst-case time complexity that is linear in the number of edges of the mesh $\mc{K}$ triangulating $\Gamma$.
Importantly, this algorithm serves as a critical pre-processing step to address the low-frequency breakdown encountered in boundary element discretizations of integral equation formulations.
\medskip

\medskip\noindent
\textbf{Keywords}: relative cohomology; loop-star decomposition; global loops; electrical field integral equation;
discrete Morse theory
\end{abstract}

\section{Introduction}
\label{sec:intro}
This paper bridges the fields of algebraic topology and computational electromagnetics.
Specifically, it addresses a challenge arising from boundary element method computations for an electromagnetic scattering problem: computing a set of representatives for a basis of the \emph{first relative cohomology group} $H^1(\Gamma, \partial \Gamma^c)$, where $\Gamma$ is a triangulated surface embedded in $\R^3$ and $\partial \Gamma^c \subset \partial \Gamma$.
To tackle this, we propose a novel graph-based algorithm with a worst-case linear-time complexity.
\paragraph*{Background and motivation.}
Throughout this paper, we consider the following prototypical electromagnetic scattering problem, which serves as the primary motivation for the algorithm discussed here.
Specifically, an incident harmonic electric field $\bm{E}^i$, propagating in a space with permittivity $\epsilon$ and permeability $\mu$, impinges upon a triangulated surface $\Gamma$ without boundary, which is a perfect conductor.
This results in a scattered electric field $\bm{E}^s$. To determine $\bm{E}^s$, we solve the \emph{Electric Field Integral Equation (EFIE)}
\[
\gamma_{\mathrm{T}} \circ \mathfrak{U}_k(\bm{J}) = -\gamma_{\mathrm{T}} \bm{E}^i,
\]
which involves finding the induced surface current density $\bm{J}$ on $\Gamma$ determining $\bm{E}^s$ via the \emph{EFIE integral operator} $\gamma_{\mathrm{T}} \circ \mathfrak{U}_k$ \cite{buffa2003electric}.
Here, $\gamma_{\mathrm{T}}$ denotes the tangential trace operator on $\Gamma$.

Crucially, when the EFIE is discretized using the \emph{Boundary Element Method (BEM)}, two numerical challenges arise as the frequency of the involved harmonic fields approaches zero---a well-documented phenomenon referred to as the \emph{low-frequency breakdown}.
The first challenge is the growth in the condition number of the resulting linear system after BEM discretization, while the second is the occurrence of round-off errors due to finite precision in the numerical integration of the EFIE's source term.
These two effects can lead to inefficiencies in standard numerical solvers, requiring significantly higher computational resources (e.g., more memory or more solver iterations), or even causing the solution to fail to converge \cite{adrian2021electromagnetic}.

To address the low-frequency breakdown of the EFIE, the key idea is to decompose the current density $\bm{J}$ as the sum $\bm{J} = \bm{J}_{\Lambda} + \bm{J}_{\Sigma}$, where $\bm{J}_{\Lambda}$ is \emph{solenoidal} (i.e., $\div_\Gamma \bm{J}_{\Lambda} = 0$, with $\div_\Gamma$ denoting the surface divergence operator on $\Gamma$) and $\bm{J}_{\Sigma}$ is non-solenoidal.
This decomposition is essentially derived from the Hodge decomposition theorem applied to the $L^2$-based \emph{de Rham complex} on $\Gamma$, along with the definition of the first (absolute) cohomology group $H^1(\Gamma)$ as a quotient space, so that we can write:
\begin{equation}
\bm{J} = \underbrace{\curl_\Gamma \bm{\psi} + \bm{g}}_{\eqqcolon \bm{J}_{\Lambda}} + \underbrace{\grad_\Gamma \varphi}_{\eqqcolon \bm{J}_{\Sigma}},
\label{eq:quasi-Helmholtz}
\end{equation}
where $\grad_\Gamma$ is the $L^2$-adjoint of $\div_\Gamma$, and $\bm{g}$ represents a cohomology class in $H^1(\Gamma)$.
The vector field $\bm{\psi}$ is referred to as the \emph{stream function} or \emph{local loop}, and $\bm{g}$ is called a \emph{(cohomology) generator} or \emph{global loop}.

Two approaches for computing this decomposition have been proposed in the engineering literature.
Initially, the \emph{loop-star} decomposition was introduced \cite{vecchi1999loop, andriulli2012loop}. This approach requires computing a set of generators for the image spaces $\mathrm{Im} \, \curl_\Gamma$ and $\mathrm{Im} \, \grad_\Gamma$, as well as a set of generators for the cohomology space $H^1(\Gamma)$.
More recently, \emph{quasi-Helmholtz projectors} have been proposed as an alternative to avoid explicitly computing cohomology generators \cite{andriulli2012well}.
This method computes the non-solenoidal part $\bm{J}_{\Sigma}$ via an orthogonal projection $P_{\Sigma}$ onto $\mathrm{Im} \, \grad_\Gamma$, while the solenoidal part $\bm{J}_{\Lambda}$ is found using the complementary orthogonal projection $P_{\Lambda}$.

However, we emphasize that the computation of generators of $H^1(\Gamma)$ is, in practice, strictly necessary to address the low-frequency breakdown of the EFIE, even when quasi-Helmholtz projectors are used.
This point is supported by recent work \cite{hofmann2023low}, which introduces a technique for reducing round-off errors in the numerical evaluation of the EFIE's source term.
Notably, the methodology outlined in Sections IV A and B of that paper clearly shows the need for a set of generators of $H^1(\Gamma)$, as they appear explicitly in the definition of the proposed quasi-Helmholtz projectors (cf. \cite[Equations (24)--(28)]{hofmann2023low}).
\paragraph*{Review of cohomology computations in the numerical analysis community.}
We begin by pointing out early works by Hiptmair and Ostrowski in \cite{hiptmair2002generators} (which we refer to as the \emph{HO-algorithm} hereafter), along with a non-exhaustive list of related works in \cite{Eppstein2002DynamicGO, lazarus2001computing, dey2013efficient}.
These articles focus on a surface $\Gamma$ without boundary and aim to compute generators of the first homology group $H_1(\Gamma)$.
However, in numerical analysis and engineering applications, what is typically needed are generators of $H^1(\Gamma)$, rather than those of $H_1(\Gamma)$.

Since $\Gamma$ is orientable, it is possible to recover generators of $H^1(\Gamma)$ from those of $H_1(\Gamma)$ using the Poincar\'e-Lefschetz Duality Theorem \cite{gross2004electromagnetic}.
This approach is mentioned in the introduction of \cite{hiptmair2002generators}, though the actual implementation details are missing.
Instead, an algorithm that implements this construction was first developed by D\l{}otko in \cite{dlotko2012fast} and later reorganized by D\l{}otko and Specogna in the series of works \cite{dlotko2013physics, dlotko2014lazy} (referred to as the \emph{DS-algorithm}).

Now, consider the case where $\Gamma$ is a surface with a non-empty boundary $\partial \Gamma$.
In this case, note that the DS-algorithm can still be applied with a simple modification of the construction, as described in \cite[Section 6]{dlotko2012fast}.
However, in the context of our electromagnetic application, the cohomology space $H^1(\Gamma)$ is no longer appropriate.
We must instead account for the boundary condition $\bm J \cdot \bm n_{\partial \Gamma} = 0$ on $\partial \Gamma$, which arises from the EFIE model.

To satisfy this boundary condition, it can be shown that the vector field $\bm g$ in \cref{eq:quasi-Helmholtz} must belong to the first \emph{relative cohomology space} $H^1(\Gamma, \partial \Gamma)$ \cite{gross2004electromagnetic}.
Therefore, algorithms like the DS-algorithm or the HO-algorithm are unsuitable for computing generators of $H^1(\Gamma, \partial \Gamma)$, since $H^1(\Gamma)$ and $H^1(\Gamma, \partial \Gamma)$ are generally quite different spaces;
see \cref{fig:disk}.

As far as we are aware, no combinatorial algorithm has been devised to address this case.
\begin{figure}
\begin{center}
\includegraphics[scale=0.3]{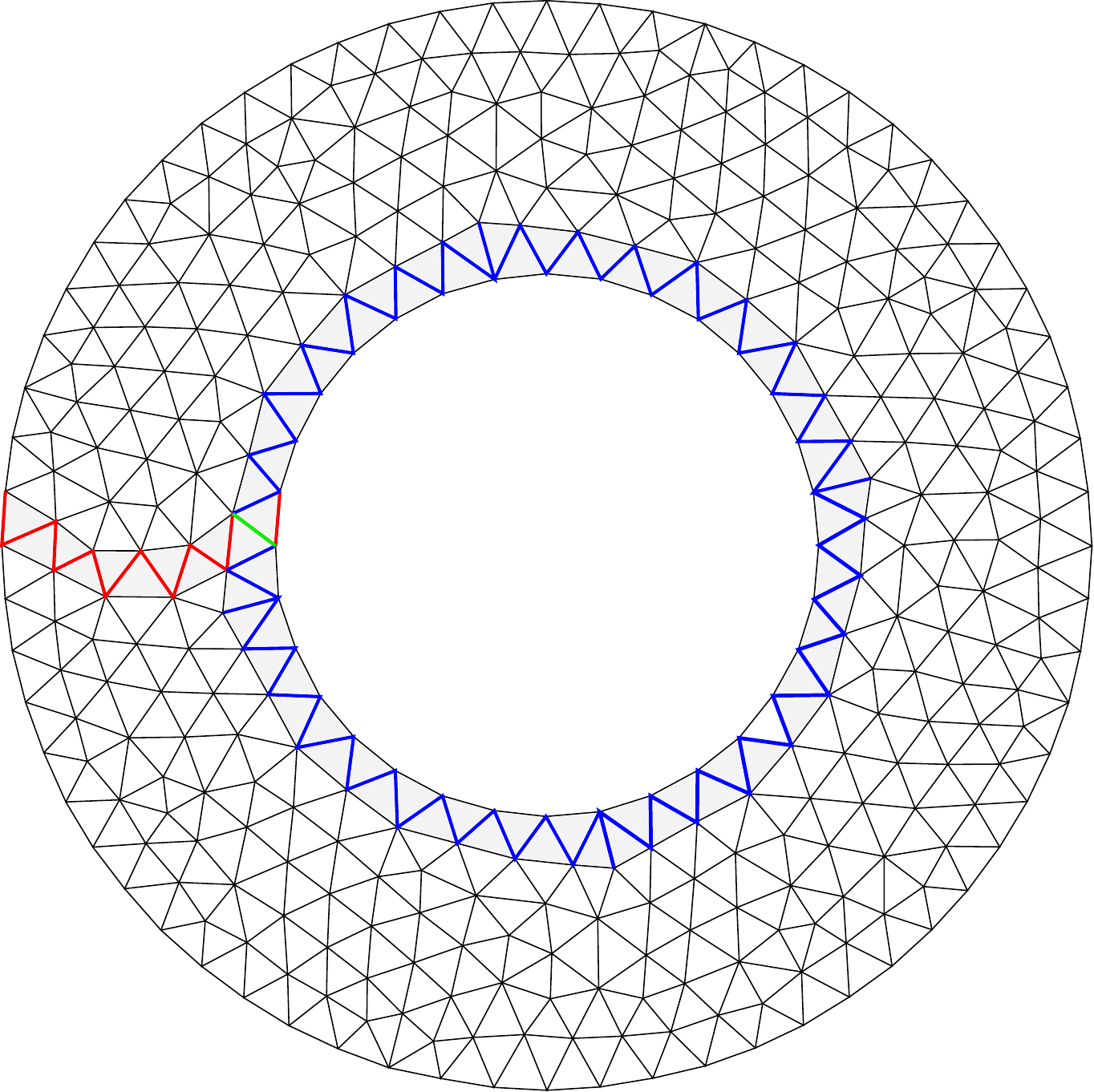}
\end{center}
\caption{A triangulation of an annulus $\Gamma$.
The support of the representative of the $H^1(\Gamma)$ generator, as formally defined in \cref{sec:homology}, is shown in red, while the support of the representative of the $H^1(\Gamma, \partial \Gamma)$ generator is shown in blue.
The green part shows the intersection of the two supports.
The shaded grey triangles indicate the support of the corresponding generators as functions in $L^2(\Gamma)$, obtained by reconstruction using the standard Raviart-Thomas basis functions.}
\label{fig:disk}
\end{figure}

In this paper, we build upon this observation and consider the more general problem of computing generators of $H^1(\Gamma, \partial \Gamma^c)$, where $\partial \Gamma^c$ is a suitable subset of the boundary $\partial \Gamma$ of $\Gamma$.
This general version arises in the EFIE model, specifically for cases where the surface $\Gamma$ is connected to external lumped-element circuits through suitable \emph{contacts} or \emph{ports}, such as in antennas and their connections with electronic circuits.

From a physical perspective, these connections result in net currents flowing through the body from one electrode to another, a phenomenon that would otherwise be absent.
Crucially, this modifies the topology of the surface. For example, in the simplest case of a simply connected surface $\Gamma$, these connections make the surface non-simply connected.
We discuss the details of the EFIE model with contact regions in \cref{sec:EFIE}.

\paragraph*{The main results and their novelty w.r.t.
literature.}
The primary objective of this paper is to provide an efficient algorithm for constructing and classifying the generators of the first relative cohomology group $ H^1(\Gamma, \partial \Gamma^c) $, where $ \Gamma $ is an arbitrary triangulated surface embedded in $ \R^3 $ and $ \partial \Gamma^c $ is a subset of its boundary.
Specifically, we present two key contributions: one for the mathematical community and the other for the engineering community.

On the mathematical side, a significant novel aspect is that our algorithm is provably correct for non-orientable surfaces, such as the \emph{M\"obius strip}.
Since we assume that $ \Gamma $ is embedded in $ \R^3 $, it follows that $ \Gamma $ must have a non-empty boundary. This is because a non-orientable surface without boundary cannot be embedded in $ \R^3 $ (e.g., the Klein bottle; see \cite[Theorem~27.11]{greenberg1981algebraic} for the general statement of this fact), while every surface with boundary can be embedded in $ \R^3 $. The key challenge for non-orientable surfaces is that a generator of the \emph{torsion subgroup} 
appears when considering relative homology with integer coefficients (a fact evident from a canonical representation of the M\"obius strip).
The problem, then, is how to distinguish the torsion generator from other homology generators.

Crucially, in the proof of correctness presented in \cref{sec:proof}, we explicitly identify the torsion generator.
In doing so, we also resolve an open problem mentioned in \cite[Section 7]{dlotko2012fast}.
The proof relies on concepts from \emph{Discrete Morse Theory}, which has several far-reaching implications.
First, it generalizes the construction used in the HO-algorithm, making our algorithm a natural extension of the HO-algorithm to the case of non-orientable surfaces.
Second, it provides an alternative proof of the classical Classification Theorem for Compact Surfaces \cite{gallier2013guide}, which follows as a corollary of our specific constructions.

Regarding the engineering contribution, the first aspect is a rigorous definition of \emph{global loops} \cite{wilton1981improving} within the framework of \emph{cohomology theory}.
In \cref{def:global.loops}, at the end of \cref{sec:EFIE}, we show that when global loops are defined as generators of the first relative cohomology group $H^1(\Gamma, \partial \Gamma^c)$ of a triangulated surface that excludes contact regions, the decomposition of the current density $\bm{J}$ in \cref{eq:quasi-Helmholtz} holds.
This formal definition has several important implications. First, it covers cases in which $\Gamma$ has no contact regions and no boundary.
In these situations, the space $H^1(\Gamma, \partial \Gamma^c)$ reduces to both $H^1(\Gamma, \partial \Gamma)$ and $H^1(\Gamma)$, since we have $\partial \Gamma^c = \partial \Gamma$ and $\partial \Gamma = \emptyset$, respectively.
Thus, our definition naturally encompasses a broad range of modeling contexts, correctly identifying the appropriate cohomology space, relative or absolute, in each case.

Second, our definition is based directly on the \emph{cochain complex} (see \cite[Section 2.6]{arnold2018finite}) supported on a triangulation $\mathcal K$ of $\Gamma$, rather than on the standard $L^2$ de~Rham complex typically used in the engineering literature (see \cite[Section IV]{adrian2021electromagnetic}).
This perspective provides new insights into algorithmic construction, enabling implementations based on efficient graph-theoretic techniques.
In particular, our algorithm has a worst-case time complexity that is linear in the number of edges of the triangulation, making it highly efficient for engineering applications such as those involving the EFIE model.
It is worth noting that, in the context of engineering computations of global loops, it has even been suggested that ``it is impossible to provide a complete description of this family of functions'' \cite{andriulli2012well}.

Another important aspect is the treatment of contact regions. While the EFIE model in the presence of contacts has been studied in engineering works \cite{miano2005surface, wang2004generalized}, the focus has primarily been on the design of the coupling between the EFIE model and external circuit equations, as well as the quasi-static assumptions near contact regions.
Mathematically, contributions have been made to establish links with relative cohomology, beginning with seminal ideas in \cite{gross2004electromagnetic} and revisited in works such as \cite{suuriniemi2004state, suuriniemi2007driving, hiptmair2021electromagnetic}, among others.
However, these contributions have largely focused on orientable domains, where the Poincar\'e-Lefschetz Duality Theorem can be applied.

In contrast, when a torsion generator is present, an additional complication arises, particularly the introduction of an extra generator associated with contact regions.
This stands in stark contrast to the orientable case, where a clear physical interpretation is also available.
Indeed, techniques that rely on duality theorems do not necessarily apply in non-orientable cases, as the hypotheses underlying these theorems are not satisfied.

We believe that the combination of all these features opens up new possibilities for efficient numerical procedures to solve the EFIE.
\section{Preliminaries from algebraic topology}
\label{sec:math}

In this preliminary section, we introduce concepts from algebraic topology and provide specific definitions tailored to our purposes.
In particular, we define the concept of a cochain as a model for a discrete vector field.

In this paper, we consider two types of cochains. The first are integer-valued cochains, such as representatives of the generators of the first cohomology group over the integers.
The second are real-valued cochains, which are used to model physical variables.
\subsection{CW complex and triangulations}

A \emph{$2$-manifold $\Gamma$ with boundary} is a compact Hausdorff topological space in which every point has a neighborhood homeomorphic to a relatively open subset of either $\R^2$ or the half-plane $\{(a, b) \in \R^2 \mid a \geq 0\}$.
The \emph{boundary} of $\Gamma$, denoted by $\partial \Gamma$, consists of all points with neighborhoods homeomorphic to a relatively open subset of the half-plane.
We say that $\Gamma$ is a \emph{$2$-manifold without boundary}, or simply a $2$-manifold, if $\partial \Gamma$ is empty.

An (\emph{open}) $m$-cell, for $ m \in \{0, 1, 2\} $, is a topological space homeomorphic to the interior $\mathrm{int}\,B^m$ of an $m$-ball $B^m$ (note that $ B^0 $ is a single point).
Let us now introduce the concept of a CW complex $\mc{K}$ defined on $\Gamma$.
\begin{definition}[CW complex]
\label{def:CW_complex}
(See \cite[Section~4]{whitehead1949combinatorial}.)\par
A ($2$-dimensional) \emph{CW complex} $\mc{K}$ on $\Gamma$ is a partition of $\Gamma$ into $m$-cells $x$ such that the following holds:  
for each $m$-cell $x$, with corresponding closure $\mathrm{cl}\,x$, there exists a continuous surjection (the \emph{gluing map}) $ g_x : B^m \to \mathrm{cl}\,x $, satisfying:  
\begin{enumerate}[label=(\roman*)]
    \item The restriction of $ g_x $ to the interior, $ g_x: \mathrm{int}\,B^m \to x $, is a homeomorphism.
\item The image of the boundary, $ g_x(\partial B^m) $, is contained in the union of a finite number of cells of dimension $k < m$.
\end{enumerate}
\end{definition}
For $ m \in \{0, 1, 2\} $, we denote by $\mc{K}^{(m)}$ the set of all $m$-cells of $\mc{K}$.

A CW complex $\K$ is called \emph{regular} if, for every $m$-cell $x$, the restriction of the gluing map $ g_x : \partial B^m \to g_x(\partial B^m) $ is a homeomorphism.

A \emph{regular} CW complex $\mc{K}$ where each $m$-cell is an $m$-simplex is also referred to as a \emph{triangulation}.
In this case, we adopt a more familiar notation and terminology for the $m$-cells.
Specifically: a \emph{node} $ n $ is a $0$-cell in $ V \coloneqq \mc{K}^{(0)} $, an \emph{edge} $ e $ is a $1$-cell in $ E \coloneqq \mc{K}^{(1)} $, and a \emph{face} $ f $ is a $2$-cell in $ F \coloneqq \mc{K}^{(2)} $.
Accordingly, we denote a triangulation $\mc{K}$ by $ \mc{K} = (V, E, F) $.

A \emph{CW subcomplex} $\mc{S}$ of $\mc{K}$ is a union of $m$-cells $ x \in \mc{K} $ such that, if $ x \subseteq \mc{S} $, then the closure of $ x $ is also a subset of $\mc{S}$.

If $\Gamma$ has a non-empty boundary $\partial \Gamma$, we denote by $\partial \mc{K}$ the subcomplex of $\mc{K}$ induced on $\partial \Gamma$ (note that $\partial \Gamma$ is itself a manifold because $\Gamma$ is).

The \emph{$1$-skeleton} of $\mc{K}$ is the union $\mc{K}^{(0)} \cup \mc{K}^{(1)}$, i.e., the set of all $0$- and $1$-cells.
It is straightforward to observe that the $1$-skeleton of $\mc{K}$ can be regarded as a graph whose vertices and edges are $\mc{K}^{(0)}$ and $\mc{K}^{(1)}$, respectively.

Using this identification, a \emph{tree} $T = (V_T, E_T)$ of $\mc{K}$ refers to a tree of the $1$-skeleton of $\mc{K}$, viewed as a graph.
We then say that $T$ is a \emph{spanning tree} if the set $V_T$ includes all the $0$-cells of $\mc{K}$, that is, if $V_T = \mc{K}^{(0)}$.
\subsection{Chains and cochains}
\label{sec:homology}

Let $\K$ be a CW complex, and let $G$ be either the commutative ring of integers $\Z$ or reals $\R$.
We start by fixing an orientation of all its $m$-cells (see, e.g., \cite[Chapter 2]{cooke2015homology}).

The $m$-th \emph{chain group} $C_m(\K;G)$ is the group of all formal sums $c = \sum_{x \in \K^{(m)}} a_x x$ with $a_x \in G$.
We call each element of $C_m(\K;G)$ a \emph{$m$-chain}.
For two $m$-chains $c=\sum_{x \in \K^{(m)}} a_x x$ and $c' = \sum_{x \in \K^{(m)}} b_{x} x$, we define their \emph{scalar product} by
\[
    \inner{c}{c'} \coloneqq \sum_{x \in \K^{(m)}} a_x b_x.
\]

The $m$-th \emph{cochain group} $C^m(\K;G)$ is the group of all (linear) maps $\cochain c : C_m(\K;G) \to G$ such that $\cochain c(c) = \cochain c (\sum_{x \in \K^{(m)}} a_x x) = \sum_{x \in \K^{(m)}} a_x \cochain c(x)$ for every $c \in C_m(\K;G)$.
We call each element of $C^m(\K;G)$ a \emph{$m$-cochain}.

For a $m$-cochain $\cochain c \in C^m(\K;G)$, its \emph{support} is the set of all $m$-cells $x \in \K^{(m)}$ such that $\cochain c(x) \neq 0$.

Notice that to compute the value $\cochain c(c)$ it is sufficient to know the values $\cochain c(x)$ on every $m$-cell.
Therefore, we can associate to the $m$-cochain $\cochain c$ the $m$-chain $\phi(\cochain c) \coloneqq \sum_{x \in \K^{(m)}} a_x x$ with $a_x = \cochain c(x)$ for $x \in \K^{(m)}$ in such a way that, for each $m$-chain $c' \in C_m(\K;G)$, it holds
$
    \cochain c(c') = \inner{\phi(\cochain c)}{c'}
$
In what follows, we will simply write $\inner{\cochain c}{c'}$ in place of $\inner{\phi(\cochain c)}{c'}$ so that, by the above equality, this expression denotes also the value $ \cochain c(c')$.

For $m \in \{1,2\}$, let $\iota : \K^{(m)} \times \K^{(m-1)} \to \Z$ be the \emph{incidence number} between a $m$-cell and a $(m-1)$-cell (see \cite[Section 2.2]{hatcher}, and, in particular, the cellular boundary formula on p.\ 140; equivalently, see \cite{cooke2015homology}).
If $\iota(x, x') \neq 0$, then we say that $x$ is \emph{incident} on $x'$, or simply that $x$ and $x'$ are incident.

We use incidence numbers to define the \emph{boundary map} $\partial_m : C_m(\K;G) \to C_{m-1}(\K;G)$ for $m \in \{1,2\}$.
For each $m$-cell $x \in \K^{(m)}$ we define $\partial_m x \coloneqq \sum_{x' \in \K^{(m-1)}} \iota(x,x') x'$, and for a $m$-chain $c = \sum_{x \in \K^{(m)}} a_x x \in C_m(\K;G)$, by linearity, we define $\partial_m c \coloneqq \sum_{x \in \K^{(m)}} a_x \partial_m x$.
It can be checked that $\partial_{m+1} \circ \partial_{m} = 0$ for $m \in \{0,1\}$.

The \emph{coboundary map} $\delta^m : C^{m-1}(\K;G) \to C^{m}(\K;G)$, for $m \in \{0,1\}$, is defined as follows.
For a $m$-cochain $\cochain c \in C^{m}(\K;G)$, $\delta^m \cochain c$ is the (unique) $(m+1)$-cochain such that
\[
\inner{\delta^m \cochain c}{ c'} = \inner{\cochain c}{\partial_{m+1} c'}
\]
for all $c' \in C_{m+1}(\K;G)$.
Using $\partial_{m+1} \circ \partial_{m} = 0$, it can be checked that $\delta^m \circ \delta^{m+1}$ for $m \in \{0,1\}$, so that the (graded) family $(C^m(\K;G), \delta^m)$ with $m \in \{0,1,2\}$ is a \emph{(cochain) complex} once we also set $\delta^2 \coloneqq 0$ \cite{arnold2018finite}.
\subsection{Homology and cohomology}
The boundary operator allows us to classify $m$-chains in $\K$ into key subgroups.

We define the subgroup of \emph{$m$-cycles} as the kernel of the boundary operator, $Z_m(\K;G) \coloneqq \ker \partial_m = \{c \in C_m(\K;G) \mid \partial_m c = 0\}$, and the subgroup $B_m(\K;G) \coloneqq \mathrm{im}\, \partial_{m+1} = \{\partial_{m+1} c \mid c \in C_{m+1}(\K;G) \}$, which consists of elements that are the image of the $(m+1)$-th boundary operator.
An essential property is that $B_m(\K;G)$ is a subset of $Z_m(\K;G)$.

The $m$-th \emph{homology group} is then the quotient group $H_m(\K;G) \coloneqq Z_m(\K;G)/B_m(\K;G)$.

When the coefficient ring is the integers $G = \mathbb{Z}$, the homology group $H_m(\K; \mathbb{Z})$ is a finitely generated Abelian group.
By the Fundamental Theorem for Finitely Generated Abelian Groups (see e.g. \cite[Theorem~3.61]{kaczynski2006computational}), it is isomorphic to the direct sum of a free Abelian group and a \emph{torsion subgroup}.

In a similar manner, we define the subgroup of \emph{$m$-cocycles} $Z^m(\K;G) \coloneqq \ker \delta^m = \{\cochain c \in C^m(\K;G) \mid \delta^m \cochain c = 0\}$, and the subgroup $B^m(\K;G) \coloneqq \mathrm{im}\, \delta^{m-1} = \{\delta^{m-1} \cochain c' \mid \cochain c' \in C^{m-1}(\K;G)\}$.
The $m$-th \emph{cohomology group} is the quotient group $H^m(\K;G) \coloneqq Z^m(\K;G) / B^m(\K;G)$.

In the following, we will also need the concept of \emph{relative cohomology}, which focuses on a complex relative to a subcomplex.
Specifically, we will focus on the CW subcomplex $\partial \K^c$ induced by $\K$ on a closed subset $\partial \Gamma^c$ of $\partial \Gamma$.

The group of $m$-th \emph{relative cochains}, denoted $C^m(\K,\partial \K^c;G)$, is defined as the subgroup of $C^m(\K;G)$ consisting of $m$-cochains that vanish on all $m$-cells of $\partial \K^c$.
This means that a $m$-cochain $\cochain c \in C^m(\K;G)$ is a relative $m$-cochain if its support does not contain any $m$-cells in $\partial \K^c$.

The coboundary operator $\delta^m$ restricts to a map on these groups, allowing the definition of the $m$-th \emph{relative cohomology group} $H^m(\K,\partial \K^c;G)$ analogously to the absolute case.

When the coefficient ring is the reals, $G = \mathbb{R}$, the $m$-th cohomology group $H^m(\K; \mathbb{R})$ is a real vector space, and its dimension is denoted by $\beta_m \in \mathbb{N}$.
A basis for $H^m(\K; \mathbb{R})$ is a set of equivalence classes $\{[\cochain{g}_1], \dots, [\cochain{g}_{\beta_m}]\} \subset H^m(\K;\mathbb{R})$ such that every element in $H^m(\K;\mathbb{R})$ can be uniquely written as a linear combination $\sum_{i=1}^{\beta_m} \alpha_i[\cochain g_i]$ with $\alpha_i \in \mathbb{R}$.
The set of $m$-cocycles $\mc G \coloneqq \{\cochain{g}_1, \dots, \cochain{g}_{\beta_m}\}$ is then called a set of \emph{cohomology generators}.
These considerations and terminologies also apply to the relative cohomology group $H^m(\K, \partial \K^c; \mathbb{R})$.
\begin{remark}[Relationship between cohomology with coefficients in $\mathbb{Z}$ and $\mathbb{R}$]
The relationship between cohomology with integer coefficients $H^m(\K; \mathbb{Z})$ and real coefficients $H^m(\K; \mathbb{R})$ is crucial for our application.
For the numerical approximation of physical fields, we use generators of $H^m(\K; \mathbb{R})$, while computational topology algorithms typically provide generators of $H^m(\K; \mathbb{Z})$.

Fortunately, generators for $H^m(\K; \mathbb{R})$ can be derived from those for $H^m(\K; \mathbb{Z})$.
The basis for the real vector space $H^m(\K; \mathbb{R})$ corresponds to the free part of the abelian group $H^m(\K; \mathbb{Z})$, effectively disregarding its torsion subgroup.
This connection is formalized by the Universal Coefficient Theorem; see \cref{sec:characterization}, and in particular \cref{cor:characterization}, for its implications in our specific constructions.
\end{remark}

\begin{remark}[Notation for cohomology with coefficients in $\mathbb{R}$]
Unless otherwise specified, cohomology groups will be taken with real coefficients.
In particular, for the remainder of the paper we write $H^m(\K)=H^m(\K;\R)$, while we continue to write $H^m(\K;\Z)$ when integer coefficients are intended.
The same convention applies to relative cohomology, e.g. $H^m(\K,\partial\K^c)=H^m(\K,\partial\K^c;\R)$.
\end{remark}

\section{Construction and classification of generators of $H^1(\Gamma, \partial \Gamma^c)$}
\label{sec:algorithms}

Let $\K= (V, E, F)$ be a triangulation covering $\Gamma$.
For a specified subset $\partial \Gamma^c$ of the boundary $\partial \Gamma$, let $\partial \K^c$ denote the subtriangulation induced by $\K$ on $\partial \Gamma^c$.

In this section, we present a general algorithm to compute a specific set $\mc G = \{\cochain g_i \}_{i=1}^{\beta_1}$ of generators of $H^1(\K, \partial \K^c)$.
The algorithm is detailed in \cref{alg:global.loops}.
The key idea is that the set $\mc G$ can be partitioned into three disjoint subsets, each corresponding to distinct topological features of the pair $(\K, \partial \K^c)$.
We use the following mnemonic notation for these classes:
\begin{itemize}
    \item $\mc G^\ha$: generators associated with the genus of $\K$ (``handles'').
\item $\mc G^\ho$: generators associated with the connected components of $\partial \K$ (``holes'').
\item $\mc G^\co$: generators associated with the connected components of the complementary set $\partial \K \setminus \partial \K^c$.
\end{itemize}
These subsets may be empty depending on the specific triangulation $\K$, and they can be computed independently.

We assume that $\mc K$ is connected for the purposes of this discussion. This assumption is not restrictive;
indeed, if $\K$ consists of $p$ connected components $\K_i$, we have the following decomposition
\[
H^1(\K, \partial \K^c) \cong \bigoplus_{i=1}^p H^1(\K_i, \partial \K_i^c),
\]
which implies that it suffices to consider each connected component $\K_i$ individually.

\subsection{Intepreting cocyles as cycles on the dual graph $\dual G$}
In the constructions used in \cref{alg:global.loops}, we require the definition of a special graph associated with the triangulation $\K=(V,E,F)$.
To define it, we introduce the sets of dual nodes $\dual V$ and dual edges $\dual E$ via a bijective \emph{duality map} $D: E \cup F \to \dual V \cup \dual E$ and a \emph{boundary map} $D_\partial: E_\partial \to \dual V_\partial$.
These are constructed as follows:
\begin{itemize}
    \item For each face $f \in F$, the corresponding \emph{dual node} is $\dual n = D(f) \in \dual V$.
\item For each internal edge $e \in E \setminus E_\partial$, shared by two faces $f_e^{(1)}, f_e^{(2)} \in F$, the corresponding \emph{dual edge} is $\dual e = \{D(f_e^{(1)}), D(f_e^{(2)})\} = D(e) \in \dual E$.
    \item For each boundary edge $e \in E_\partial$, incident to the unique face $f_e \in F$, we define a (boundary) dual node $\dual n = D_\partial(e) \in \dual V_\partial$ and a corresponding dual edge $\dual e = \{D(f_e), D_\partial(e)\} = D(e) \in \dual E$.
\end{itemize}
Then, we define the graph $\dual G = (\dual V, \dual E)$ to be the \emph{dual graph} of $\K$.

A \emph{path} $\dual p$ of $\dual G$ is a sequence of $l \in \N, l \geq 1$ distinct dual nodes $(\dual n_1, \dots, \dual n_l)$ and $l-1$ distinct dual edges $(\dual e_1, \dots, \dual e_{l-1})$ such that $\dual e_i=\{\dual n_{i}, \dual n_{i+1}\}$ for $i \in \{1, \dots, l-1\}$ ($l=1$ means the path reduces to the single dual node $\dual n_1$).
By duality via the map $D$, there is a corresponding sequence of distinct $l$-faces  $(f_1=D^{-1}(\dual n_1),\dots, f_l = D^{-1}(\dual n_l))$ and $l-1$ distinct edges $(e_1=D^{-1}(\dual e_1), \dots, e_{l-1} = D^{-1}(\dual e_{l-1}))$ of $\K$.

We use paths on the dual graph $\dual G$ to construct $1$-cochains of $\K$ according to the following procedure.
\begin{algorithm}[H]
\caption{Construct a $1$-cocycle $\cochain{g}$ on $\K$ from a path 
$\dual p$ and a pair of edges $(e,e')$ with $e\subset f_1$ and $e'\subset f_l$}
\begin{algorithmic}[1]
    \State initialize $\cochain g = \cochain 0$
    \State set $\inner{\cochain g}{e}$ to 1
    \For {$i = 1$ to $l-1$} 
        \State let $f_i = D^{-1}(\dual n_i)$ and $e_i = D^{-1}(\dual e_i)$ 
        \State compute
        \[   
     \inner{\cochain g}{e_i}
        \coloneqq 
        -\frac{\iota(f_i, e_i)}{\iota(f_i, e_{i-1})}
        \inner{\cochain g}{e_{i-1}}
        \]
        \Comment{with the conventions $e_0 := e$ and $e_l := e'$}
    \EndFor
    \If {$\inner{\cochain g}{e'} = 0$}
        \State \Return $\cochain 0$
    \Else
        \State \Return $\cochain g$
   
  \EndIf
\end{algorithmic}
\label{alg:cocycle}
\end{algorithm}

\subsection{The first class of generators: the set $\mc G^\ha$}
The first class of generators, denoted by $\mc G^\ha$, corresponds to the non-trivial genus of the surface.
These generators are the most difficult to identify combinatorially.
Their construction relies on a pair of trees, one defined on $\K$ and the other on its dual graph $\tilde G$.
\begin{algorithm}[H]
\caption{Construct the set $\mc G^\ha$}
\begin{algorithmic}[1]
    \State construct a spanning tree $T$ on each connected component of $\partial \K$
    \State extend $T$ to a spanning tree $(V_T, E_T)$ on the whole $\K$
    \State construct a spanning tree $\dual T = (\dual V_{\dual T}, \dual E_{\dual T})$ on the subgraph $\dual G' = (\dual V, \dual E \setminus \bigcup_{e \in E_T} \{D(e)\})$ \Comment{i.e., the subgraph of $\dual G$ made of all dual edges in $\dual G$ that are not dual to an edge in $T$}
    \For{each connected component $\partial \K_k$ 
of $\partial \K$}
        \State select the unique edge $e$ of $\partial \K_k$ not contained in $E_T$ 
        \Comment{the uniqueness follows from the fact that $T$ is constructed first on the boundary, as in line 1}
        \State add the dual edge $D(e)$ to $\dual E_{\dual T}$ and the dual node $D_\partial(e)$ to $\dual V_{\dual T}$
    \EndFor
    \State let $E_M := \{e \in E \mid e \notin E_T, D(e) \notin \dual E_{\dual T}\}$ \Comment{i.e., the 
set of edges not in $T$ whose dual is not in $\dual T$}
    \For{each $e \in E_M$}
        \State let $f_{e}^{(1)}, f_{e}^{(2)}$ be the unique two faces of $\K$ incident on $e$
        \State construct the unique path $\dual p_{e}$ in $\dual T$ from $D(f_{e}^{(1)})$ to $D(f_{e}^{(2)})$
        \State apply \cref{alg:cocycle} with input $\dual p_e$ and the pair $(e,e)$ to get $\cochain g_e^{\ha}$

    \EndFor
    \State let $E_M^{\rm{II}} := \{ e \in E_M \mid \cochain g_e^{\ha} = \cochain 0 \}$ \Comment{i.e., the set of edges for which the $1$-cochain $\cochain g_e^{\ha}$ computed from \cref{alg:cocycle} is the zero $1$-cochain}
    \If{ $\abs{E_M^{\rm{II}}} = 0$}
        \State \textbf{return} $\mc G^{\ha} \coloneqq \{\cochain g^{\ha}_1, \dots, \cochain g^{\ha}_{N^{\ha}}\} = \{\cochain g_e^{\ha}\}_{e \in E_M}$ \Comment{$N^{\ha}$ is equal to $\abs{E_M}$}
    \Else \Comment{i.e., if $\abs{E_M^{\rm{II}}} > 0$}
        \State select an edge $e^* \in E_M^{\rm{II}}$
    
     \For{each $e \in E_M^{\rm{II}} \setminus \{e^*\}$}
            \State let $f_{e}^{(1)}, f_{e}^{(2)}$ be the unique two faces of $\K$ incident on $e$
            \State let $f_{e^*}^{(1)}, f_{e^*}^{(2)}$ be the unique two faces of $\K$ incident on $e^*$
            \State construct the unique path $\dual p_{e,e^*}^{(1)}$ in $\dual T$ from $D(f_{e}^{(1)})$ to $D(f_{e^*}^{(1)})$
            \State construct the unique 
path $\dual p_{e,e^*}^{(2)}$ in $\dual T$ from $D(f_{e}^{(2)})$ to $D(f_{e^*}^{(2)})$
            \State apply \cref{alg:cocycle} with input $\dual p_{e,e^*}^{(1)}$ and $(e,e^*)$ to get $\cochain g_{e}^{(1)}$
            \State apply \cref{alg:cocycle} with input $\dual p_{e,e^*}^{(2)}$ and $(e,e^*)$ to get $\cochain g_{e}^{(2)}$
            \State define $\cochain g_e^{\ha}$ by
            \begin{align*}               
  \inner{\cochain g_e^{\ha}}{e'} \coloneqq
                    \begin{cases}
                        \inner{\cochain g_e^{(1)}}{e'}  &\text{if } e' \in \{e,e^*\},\\
                        \inner{\cochain g_e^{(1)}}{e'} + \inner{\cochain g_e^{(2)}}{e'}  &\text{if } e' \notin \{e, e^*\}           
          \end{cases}
            \end{align*}
        \EndFor
        \State \textbf{return} $\mc G^{\ha} \coloneqq \{\cochain g^{\ha}_1, \dots, \cochain g^{\ha}_{N^{\ha}}\} = \{\cochain g_e^{\ha}\}_{e \in E_M \setminus \{e^*\}}$ \Comment{$N^{\ha}$ is equal to $\abs{E_M} - 1$}
    \EndIf
\end{algorithmic}
\label{alg:handles}
\end{algorithm}

\subsection{The second class of generators: the set $\mc G^\ho$}
The set $\mathcal G^{\ho}$ contains the generators associated with the connected components of $\partial \mathcal K$.
Let $\partial \mathcal K_k$, for $k \in \{1,\dots, N^{\ho}\}$, denote the connected components of $\partial \mathcal K$, and fix one of them, say $\partial \mathcal K_{N^{\ho}}$.
For each remaining component $\partial \mathcal K_k$ with $k \in \{1,\dots, N^{\ho}-1\}$, we construct a cohomology generator $\cochain g^{\ho}_k$ whose support consists precisely of those edges that have exactly one node in $\partial \mathcal K_k$.
\begin{algorithm}[H]
\caption{Construct the set $\mc G^{\ho}$}
\begin{algorithmic}[1]
    \For{each connected component $\partial \K_k$ of $\partial \K$ except $\partial \K_{N^{\ho}}$}
        \State define the $0$-cochain $\cochain c_k^{\ho}$ by
        \begin{align*}
        \inner{\cochain c_k^{\ho}}{n} \coloneqq
            \begin{cases}
            1 &\text{if } n \in \partial \K_k,\\
            0 &\text{if } n \notin \partial \K_k.
\end{cases}
        \end{align*}
        \Comment{$\cochain c_k^{\ho}$ is equal to $1$ on each boundary node $n$ of $\partial \K_k$ and $0$ elsewhere.}
        \State compute $\cochain g^{\ho}_k \coloneqq \delta^{0}\cochain{c}_k^{\ho}$ \Comment{Apply the coboundary map $\delta^0$ to $\cochain c_k^{\ho}$.}
    \EndFor
    \State \textbf{return} $\mc G^{\ho} \coloneqq \{\cochain g^{\ho}_1, \dots, \cochain g^{\ho}_{N^{\ho}-1}\}$
\end{algorithmic}
\label{alg:holes}
\end{algorithm}

\subsection{The third class of generators: the set $\mathcal{G}^\co$}
The set $\mathcal G^{\co}$ contains the generators associated with the connected components of the complementary set $\partial \K^\co \coloneqq \partial \K \setminus \partial 
\K^c$.
Let $\partial \mathcal K^{\co}_j$ for $j \in \{1,\dots, N^{\co}\}$ denote the connected components of $\partial \mathcal K^{\co}$, and fix one of them, say $\partial \mathcal K^{\co}_{N^{\co}}$.

For each remaining component $\partial \mathcal K^{\co}_j$ with $j \in \{1,\dots, N^{\co}-1\}$, we construct a generator $\cochain g^{\co}_j$ whose support consists precisely of the edges whose duals belong to a path in the dual graph $\dual G$ connecting $\partial \mathcal K^{\co}_j$ to $\partial \mathcal K^{\co}_{N^{\co}}$.
Moreover, depending on the cardinality of the set $E_M^{\mathrm{II}}$ introduced in \cref{alg:handles}, an additional generator must be included.
\begin{algorithm}[H]
\caption{Construct the set $\mc G^\co$}
\begin{algorithmic}[1]
    \State let $\dual T$ be the spanning tree of $\dual G'$ computed \cref{alg:handles} at line 3 \Comment{the spanning tree $\dual T$ has already been computed in \cref{alg:handles} (see the order in which the algorithms are called in \cref{alg:global.loops})}
    \State select an edge $e$ of $\partial \K^{\co}_{N^{\co}}$ and let $f_{e}$ be the unique face of $\K$ incident on it
    \For{each connected component $\partial \K^{\co}_j$ of $\partial \K^{\co}$ except $\partial \K^{\co}_{N^{\co}}$}
        \State select an edge $e_j$ of $\partial \K^{\co}_j$
   
      \State let $f_{e_j}$ be the unique face of $\K$ incident on $e_j$
        \State construct the unique path $\dual p_{e_j,e}$ in $\dual T$ from $D(f_{e_j})$ to $D(f_{e})$ \label{alg:contacts.line.path}
        \State apply \cref{alg:cocycle} with input $\dual p_{e_j,e}$ and $(e_j, e)$ to get $\cochain g_j^{\co}$
    \EndFor
    \State let $E_M^{\rm{II}}$ be the set computed in \cref{alg:handles} at line 14 \Comment{the set $E_M$ and its subset $E_M^{\rm{II}}$ have already been computed in \cref{alg:handles} (see comment at line 1)}
    
 \If{ $\abs{E_M^{\rm{II}}} = 0$}
        \State \textbf{return} $\mc G^{\co} \coloneqq \{\cochain g^{\co}_1, \dots, \cochain g^{\co}_{N^{\co}-1}\}$ 
    \Else \Comment{If $\abs{E_M^{\rm{II}}} > 0$}
        \State select an edge $e^* \in E_M^{\rm{II}}$
        \State let $f_{e^*}^{(1)}, f_{e^*}^{(2)}$ be the unique two faces of $\K$ incident on $e^*$
        \State construct the unique path $\dual p_{e,e^*}^{(1)}$ in $\dual T$ from $D(f_{e})$ to $D(f_{e^*}^{(1)})$
        \State construct the unique path $\dual p_{e,e^*}^{(2)}$ 
in $\dual T$ from $D(f_{e})$ to $D(f_{e^*}^{(2)})$
        \State apply \cref{alg:cocycle} with input $\dual p_{e,e^*}^{(1)}$ and $(e,e^*)$ to get $\cochain g_{e}^{(1)}$
        \State apply \cref{alg:cocycle} with input $\dual p_{e,e^*}^{(2)}$ and $(e,e^*)$ to get $\cochain g_{e}^{(2)}$
        \State define $\cochain g_{N^{\co}}^{\co}$ by
        \begin{align*}
        \inner{\cochain g_{N^{\co}}^{\co}}{e'} \coloneqq
            \begin{cases}            
 \inner{\cochain g_e^{(1)}}{e'}  &\text{if } e' = e^*,\\
            \inner{\cochain g_e^{(1)}}{e'} + \inner{\cochain g_e^{(2)}}{e'} &\text{if } e' \neq e^*.
\end{cases}
        \end{align*}
        \State \textbf{return} $\mc G^{\co} \coloneqq \{\cochain g^{\co}_1, \dots, \cochain g^{\co}_{N^{\co}}\}$
    \EndIf
\end{algorithmic}
\label{alg:contacts}
\end{algorithm}

\subsection{The main algorithm}
\label{sec:main.algo}
The main algorithm used to construct the complete set of generators $\mathcal{G}$ combines 
Algorithms~\ref{alg:handles}, \ref{alg:holes}, and \ref{alg:contacts}, and can be described as a three-step procedure.
\begin{algorithm}[H]
\caption{Construct the set $\mathcal{G}$}
\begin{algorithmic}[1]
    \State apply \cref{alg:handles} to obtain the set $\mathcal{G}^\ha$
    \If{$\partial \K \neq \emptyset$} \Comment{if $\K$ has a boundary}
        \State apply \cref{alg:holes} to obtain the set $\mathcal{G}^\ho$ 
        \If{$\partial \K^{\co} \neq \emptyset$} \Comment{if $\K^c$ is not $\partial \K$}
            \State apply \cref{alg:contacts} to obtain the set $\mathcal{G}^\co$ 
        \EndIf
    \EndIf
    \State \textbf{return} $\mathcal{G} \coloneqq \mathcal{G}^\ha 
\sqcup \mathcal{G}^\ho \sqcup \mathcal{G}^\co$ \Comment{return the union of all generators}
\end{algorithmic}
\label{alg:global.loops}
\end{algorithm}

In the next section, we provide the theoretical proof of correctness for \cref{alg:global.loops}.
\section{Proof of correctness of \cref{alg:global.loops}}
\label{sec:proof}
We divide the proof in nine steps.
\subsection{Tools from Discrete Morse Theory}
To start with, we recall a specific version of Discrete Morse Theory introduced by Kozlov in his book \cite{Kozlov2008}.

Let $\K$ be a CW complex, and for $m \in \{1, 2\}$, let $x$ be a $m$-cell and $x'$ be a $(m-1)$-cell of $\K$.
If $\iota(x,x')\neq 0$, we write $x' \prec x$.
\begin{definition}[Matching, acyclic matching]
\label{def:matching}
A \emph{matching} $M$ of $m$-cells on $\K$ is a set of pairs $(x,x')$ made of an $m$-cell and a $(m-1)$-cell of $\K$ such that:
\begin{enumerate}[label=(\roman*)]
\item If $(x,x') \in M$, then $x' \prec x$.
\item If $(x_1, x'), (x_2, x') \in M$, then $x_1 = x_2$.
\end{enumerate}
When $(x,x') \in M$, we write $x' = d(x)$.
A matching $M$ is \emph{acyclic} if there does not exist a cycle of matched elements $x_i$ for $i \in \{1, \dots l\}$ of the form
\begin{equation*}
x_1 \succ d(x_1) \prec x_2 \succ d(x_2) \prec \cdots \prec x_l \succ d(x_l) \prec x_1,
\end{equation*}
with $l \geq 2$.
\end{definition}

We say that an $m$-cell or $(m-1)$-cell of $\K$ is \emph{matched} in $M$ if it appears as the first or second member of a pair in $M$.
The set of \emph{critical cells} of $\K$ is set of all cells of $\K$ that are not matched in $M$.

The next lemma says that to every spanning tree on $\K$ we can associate an acyclic matching.
\begin{lemma}[Acyclic matching from spanning tree]
\label{lem:tree.acyclic}
Let $T$ be a spanning tree of $\K$.
Then, there exist an acyclic matching $M_T$ such that every vertex of $T$, except one (denoted $n$), is matched in $M_T$.
In particular, $n$ can be chosen arbitrarily. 
Similarly, let $\dual G=(\dual V, \dual E)$ be the dual graph of $\K$ and let $\dual T$ be a spanning tree on $\dual G$.
Then, there exist an acyclic matching $M_{\dual T}$ such that every dual vertex of $\dual T$, except one (denoted $\dual n$), is matched in $M_{\dual T}$.
In particular, $\dual n$ can be chosen arbitrarily. 
\end{lemma}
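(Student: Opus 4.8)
The plan is to construct the matching explicitly by rooting the spanning tree and pairing each non-root vertex with its unique parent edge, and then to establish acyclicity via a strict monotonicity argument on the depth function of the rooted tree.

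For the primal statement, I would fix the vertex $n$ arbitrarily and regard $T$ as rooted at $n$. Every vertex $v \in V_T \setminus \{n\}$ then has a unique incident edge $e_v \in E_T$ lying on the (unique) path in $T$ from $v$ to $n$; I call $e_v$ the \emph{parent edge} of $v$. I set $M_T \coloneqq \{(e_v, v) : v \in V_T \setminus \{n\}\}$. Since $v$ is an endpoint of $e_v$, we have $v \prec e_v$, so condition (i) of \cref{def:matching} holds. To see that $M_T$ is a genuine matching, with each cell appearing in at most one pair, I would observe that each non-root vertex determines a unique parent edge, while each tree edge $e=\{u,w\}$ is the parent edge of exactly one of its endpoints, namely the one farther from $n$; a simple count then shows that $M_T$ pairs the $\abs{V_T}-1$ non-root vertices bijectively with the $\abs{V_T}-1$ tree edges, leaving only $n$ unmatched. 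As $n$ was arbitrary, it can be chosen freely.

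The crux is acyclicity, which I expect to be the heart of the argument. Suppose, toward a contradiction, that there is a cycle $x_1 \succ d(x_1) \prec x_2 \succ \cdots \prec x_l \succ d(x_l) \prec x_1$ of matched cells, where the $x_i$ are (pairwise distinct) tree edges. Writing $v_i \coloneqq d(x_i)$ for the child endpoint matched to $x_i$, I introduce the depth $h(v)$ equal to the length of the path in $T$ from $v$ to $n$. The relation $v_i \prec x_{i+1}$, combined with $x_{i+1} \neq x_i$ (the cells of a cycle being distinct), forces $v_i$ to be the endpoint of $x_{i+1}$ closer to $n$: otherwise $x_{i+1}$ would be the parent edge of $v_i$, hence equal to $x_i$ by uniqueness of parent edges. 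Consequently $v_{i+1}$ is a child of $v_i$, so $h(v_{i+1}) = h(v_i)+1$. Iterating around the cycle gives $h(v_1) < h(v_2) < \cdots < h(v_l) < h(v_1)$, a contradiction; hence $M_T$ is acyclic.

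Finally, the dual statement follows from the identical construction applied to $\dual G$. A dual node is a face of $\K$ and a dual edge $D(e)$ is an edge of $\K$ incident to the two faces it joins, so rooting $\dual T$ at a dual node $\dual n$ and pairing each non-root dual node $D(f)$ with its parent dual edge yields a matching of $2$-cells with $1$-cells satisfying $e \prec f$. The same depth argument on $\dual T$ delivers acyclicity, and $\dual n$ is again arbitrary. The only delicate point is the bookkeeping that the parent-edge pairing is simultaneously injective on vertices and on edges; once this is in place, the monotonicity of $h$ makes acyclicity immediate.
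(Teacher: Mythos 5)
Your proposal is correct: the root-the-tree, match-each-non-root-vertex-to-its-parent-edge construction, with acyclicity via strict monotonicity of the depth function, is exactly the standard argument, and the paper itself gives no explicit proof but merely defers to \cite[Section 4.2]{pitassi2022inverting}, where this same construction is carried out. Your version is therefore a self-contained substitute for the citation rather than a different route; the only point worth noting is that your reading of \cref{def:matching} (consecutive cells $x_i$, $x_{i+1}$ in a cycle are distinct) is the intended one, without which the acyclicity condition would be vacuous.
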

\begin{proof}
The proof of these two results follows directly from the reasoning in \cite[Section 4.2]{pitassi2022inverting} for the cases $k=0$ and $k=2$ (notice that the construction for the case $k=2$ is stated for $3$-dimensional domains but it can be easily adapted to $2$-dimensional ones).
\end{proof}

The next theorem can be regarded as the main theorem of Discrete Morse Theory for CW complexes, as it highlights the topological role of acyclic matchings.
\begin{theorem}\textup{\cite[Theorem 11.13 (b) and (c)]{Kozlov2008}}
\label{thm:main}
Let $M$ be an acyclic matching on $\K$, and denote by $d_m$ the number of critical $m$-cells of $\K$ with respect to $M$.
For every critical $m$-cell $x$ and $(m-1)$-cell $x'$ of $\K$, let $S(x,x')$ be the set of all sequences $s$ of matched cells of the form
\begin{equation*}
s \coloneqq (x \succ d(x_1) \prec x_1 \succ d(x_2) \prec x_2 \succ \dots \succ d(x_l) \prec x_l \succ x').
\end{equation*}
For each $s \in S(x,x')$, define $w(s)$ as the $m$-chain computed from the sequence $s$ as follows:
$$w(s) \coloneqq w_l,$$
where, for all $i \in \{0, \dots l\}$, $w_i$ is computed recursively as
\begin{equation}
\begin{cases}
w_0\coloneqq \xi_0\,x,\\
w_i \coloneqq w_{i-1} + \xi_{i}\, x_{i},
\end{cases}
\label{eq:recursion}
\end{equation}
with 
\begin{equation}
\begin{dcases}
\xi_0 \coloneqq 1 & \text{for } i = 0, \\[2pt]
\xi_i \coloneqq -\,\dfrac{\inner{\partial_{m+1} w_{i-1}}{d(x_i)}}{\inner{\partial_{m+1} x_i}{d(x_i)}} & \text{for } i \in \{1, \dots, l\}.
\end{dcases}
\label{eq:coefficient}
\end{equation}
Then, $\K$ is homotopy equivalent to a CW complex $\K_M$ with exactly $d_m$ cells of dimension $m$.
In particular, there is a natural identification of the cells of $\K_M$ with the critical cells of $\K$ such that, for any $x,x'$ as above, the incidence number $\iota^M(x,x')$ in $\K_M$ is given by
\begin{equation}
\iota^M(x,x') \coloneqq \sum_{s \in S(x,x')} \inner{\partial_m w(s)}{x'}.
\label{eq:morse.formula}
\end{equation}
\end{theorem}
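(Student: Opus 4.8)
The plan is to prove the two assertions of \cref{thm:main} separately: first the homotopy equivalence $\K \simeq \K_M$ with exactly $d_m$ cells in each dimension $m$, and then the combinatorial formula \cref{eq:morse.formula} for the incidence numbers of $\K_M$. For the first part I would argue geometrically, by a sequence of elementary collapses; for the second I would pass to the cellular chain complex $(C_\bullet(\K;G),\partial_\bullet)$ and construct an explicit algebraic deformation retraction onto the subcomplex spanned by the critical cells, reading off the induced differential as a sum over the gradient paths $s \in S(x,x')$. The recursion \cref{eq:recursion,eq:coefficient} will turn out to be precisely the bookkeeping of this retraction carried out along a single path; note that this is the same mechanism already exploited by \cref{alg:cocycle}, whose update rule is the $m=1$ instance of \cref{eq:coefficient}.

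For the homotopy equivalence I would first use the acyclicity hypothesis in \cref{def:matching} to produce a linear extension of the matching: since no closed alternating cycle exists, the relation ``$x$ must be treated before $y$'' generated by the matched pairs is a strict partial order, and any topological sort of it gives an admissible ordering of the pairs $(x,d(x))$. Processing the pairs in this order, each step is an elementary collapse of the free face $d(x)$ through $x$ (admissibility guarantees that $d(x)$ is indeed free at the moment it is treated), and an elementary collapse is a deformation retraction, hence a homotopy equivalence. After all matched pairs have been removed, exactly the critical cells remain, yielding a CW complex $\K_M$ with $d_m$ cells of dimension $m$ together with a homotopy equivalence $\K \simeq \K_M$.

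For the incidence formula I would work algebraically. I split each chain group as $C_m = \langle\text{critical }m\text{-cells}\rangle \oplus U_m \oplus D_m$, where $D_m$ is spanned by the $m$-cells matched downward (the cells $x_i$ with $d(x_i)$ of dimension $m-1$) and $U_m$ by those matched upward, and define the algebraic gradient $V:C_{m-1}\to C_m$ by $V(d(x)) = \pm\,\iota(x,d(x))^{-1}\,x$ on matched $(m-1)$-cells (the sign fixed so that $\Phi$ below is a chain map) and $V=0$ otherwise; this is well defined since for a regular CW complex $\iota(x,d(x))=\pm 1$. The flow operator $\Phi \coloneqq \mathrm{id} + \partial V + V\partial$ is a chain map, and the crucial step is to show that acyclicity forces $\Phi$ to stabilize, $\Phi^N = \Phi^{N+1}$ for all large $N$; the stable map $\Phi^\infty$ is then a chain projection onto a subcomplex $C^M_\bullet$ isomorphic to the Morse complex generated by the critical cells. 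Writing $j$ for the inclusion of the critical cells and $\pi=\Phi^\infty$ for the projection, the induced differential is $\partial^M = \pi\circ\partial\circ j$. To recover \cref{eq:morse.formula} I would unwind $\partial^M x$ for a critical $m$-cell $x$ as a sum over gradient paths: along a fixed path $s$, the partial sums $w_i$ of \cref{eq:recursion} are exactly the successive flow-images, the coefficient $\xi_i$ of \cref{eq:coefficient} being the unique scalar that cancels the component of $\partial_m w_{i-1}$ on the matched cell $d(x_i)$, i.e. $\inner{\partial_m w_i}{d(x_i)} = 0$ (here $\inner{\partial_m x_i}{d(x_i)} = \iota(x_i,d(x_i)) \neq 0$ because $d(x_i)\prec x_i$). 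Consequently $\inner{\partial_m w(s)}{x'}$ is the contribution of the path $s$ to the coefficient of the critical cell $x'$ in $\partial^M x$, and summing over all $s\in S(x,x')$ yields $\iota^M(x,x')$.

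The hard part will be the second assertion, and within it two points in particular. The first is proving termination of the flow: I must show that acyclicity of $M$ prevents any cell from being revisited, so that the iterated reductions of a critical cell form finitely many strictly descending paths and $\Phi^\infty$ is well defined -- this is exactly where the exclusion of alternating cycles in \cref{def:matching} is indispensable. The second is the sign and overlap bookkeeping needed to identify the abstract differential $\pi\circ\partial\circ j$ with the per-path sum $\sum_{s}\inner{\partial_m w(s)}{x'}$: since distinct gradient paths may share intermediate cells, I must verify that the recursion \cref{eq:recursion}, which cancels $d(x_i)$ one path at a time, assembles correctly into the global retraction without double counting, so that all cross-contributions among overlapping paths cancel and only the critical-cell components survive.
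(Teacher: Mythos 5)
The paper does not prove this statement: it is imported verbatim (with adapted notation) from Kozlov \cite[Theorem 11.13 (b) and (c)]{Kozlov2008}, so there is no in-paper argument to compare yours against. Your sketch reproduces the standard proof strategy from the discrete Morse theory literature---a homotopy equivalence obtained by eliminating the matched pairs in an order furnished by acyclicity, plus the stabilized flow operator $\Phi^\infty$ with $\Phi = \mathrm{id} + \partial V + V\partial$ projecting onto the Morse complex, whose differential is then unwound as a sum over gradient paths. The two difficulties you single out (termination of the flow, which is exactly where the exclusion of alternating cycles in \cref{def:matching} enters, and the overlap/sign bookkeeping identifying $\pi\circ\partial\circ j$ with the per-path sums of \cref{eq:morse.formula}) are indeed where the real work lies in Kozlov's and Forman's treatments, and your identification of $\xi_i$ as the unique scalar killing the component of $\partial_m w_{i-1}$ on $d(x_i)$ is the correct reading of \cref{eq:coefficient}.

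One step of your first part would fail as literally stated: processing the pairs in a topological sort of the matching-generated order and performing ``elementary collapses'' does not work, because $d(x)$ is generally \emph{not} a free face at the moment the pair $(x,d(x))$ is treated---critical cells, and matched cells of higher dimension, remain attached along it. In the paper's own application this is unavoidable: the unique critical $2$-cell $f$ of $\bar\K_M$ arises from a matching in which essentially every $1$-cell of $\bar\K$ lies in the boundary of some surviving $2$-cell, so no pair consisting of a $1$-cell and a $0$-cell is ever literally collapsible. The standard repair (the one used by Kozlov) is to proceed by decreasing dimension and to re-attach the surviving cells along deformed attaching maps, i.e.\ to replace literal collapses by homotopy pushouts that push the attaching maps of the remaining cells across each retraction; this is precisely why the conclusion is only ``homotopy equivalent to a CW complex with $d_m$ cells of dimension $m$'' rather than ``collapses onto a subcomplex.'' With that correction, and with the termination and overlap arguments you already flag as the hard part actually carried out, your plan coincides with the known proof of the cited theorem.
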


\subsection{Properties of the $1$-cochain computed by \cref{alg:cocycle}}
Let $\dual{p}$ be a path in $\dual{G}$, and denote by $(f_1, \dots, f_l)$ the corresponding sequence of $2$-cells and by $(e_1, \dots, e_{l-1})$ the associated sequence of $1$-cells in $\mc{K}$.
Define $M_{\dual{p}}$ as the set of all pairs $(f_i, e_i)$ for $i \in \{1, \dots, l-1\}$.
It is straightforward to verify that $M_{\dual{p}}$ forms an acyclic matching on $\mc{K}$ such that $f_l$ is a critical $2$-cell in $\K$.

Accordingly, for each critical $1$-cell $e$ such that $e \subset f_1$, we define \emph{the sequence $s_{\dual{p}}$ associated with the path $\dual{p}$ in $M_{\dual{p}}$} as  
\begin{equation*}
s_{\dual{p}} \coloneqq (f_l \succ e_{l-1} \prec f_{l-1} \succ e_{l-2} \prec f_{l-2} \succ \dots \succ e_1 \prec f_1 \succ e).
\end{equation*}

\begin{lemma}[Properties of $1$-cochains of \cref{alg:cocycle}]
\label{lem:cocycle} 
Let $\dual p$ be a path in $\dual G$, and let $(e,e')$ be the pair of $1$-cells of $\K$ provided as input to \cref{alg:cocycle}, with $e \subset f_1$ and $e' \subset f_l$.
Let $\cochain g$ be the $1$-cochain of $\K$ computed by \cref{alg:cocycle}.
If $\cochain g \neq \cochain 0$, then $\inner{\cochain g}{e} = 1$, and we have
\begin{align}
\inner{\cochain g}{e'} &= -\frac{\iota(\xi_l f_l, e')}{\iota(\xi_1 f_1, e)},
\label{eq:transformation}
\end{align}
where the coefficients $\xi_1$ and $\xi_l$ are defined as in \cref{eq:coefficient} for the sequence $s_{\dual p}$, using the index substitution $i \leftarrow l-i$ for $i \in \{1, \dots, l\}$, so as to match the index of each $2$-cell $f_i$ along the path $\dual p$.
Furthermore, for all $i \in \{1, \dots, l-1\}$, the following relation holds:
\begin{equation}
\inner{\cochain g}{\partial_2(\xi_i f_i)} = 0.
\label{eq:cocycle.face}
\end{equation}
\end{lemma}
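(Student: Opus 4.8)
The plan is to exhibit both the cochain $\cochain g$ returned by \cref{alg:cocycle} and the Discrete Morse coefficients $\xi_i$ of the sequence $s_{\dual p}$ as solutions of the same two--term recursion, and then to compare the two. Throughout I use that $\mc K$ is a triangulation, so $\iota(f_i,e_j)=\pm1$ when $e_j\subset f_i$ and $0$ otherwise; in particular $\iota(f_i,e_j)^2=1$ and $1/\iota(f_i,e_j)=\iota(f_i,e_j)$, which I invoke repeatedly. The first assertion is immediate: the only instruction setting the value $1$ is line~2, acting on $e=e_0$, the critical $1$--cell at the bottom of $s_{\dual p}$; every later assignment in the loop touches one of the matched edges $e_1,\dots,e_{l-1}$, and these are distinct from $e_0$ because $\dual p$ is a path of distinct dual edges with $D(e)\notin\{\dual e_1,\dots,\dual e_{l-1}\}$. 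Hence $\inner{\cochain g}{e}=1$ is never overwritten.

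For \cref{eq:transformation} I would telescope the loop. Applying the update of \cref{alg:cocycle} for $i=1,\dots,l$ with the conventions $e_0=e$, $e_l=e'$ (so that $\inner{\cochain g}{e'}$ is produced by the recurrence through the closing face $f_l$) and multiplying, the initial value $1$ propagates to
\[
\inner{\cochain g}{e'}=(-1)^{l}\prod_{i=1}^{l}\frac{\iota(f_i,e_i)}{\iota(f_i,e_{i-1})}.
\]
Separately I unfold the recursion \cref{eq:coefficient} defining the coefficients $\xi_i$ of $s_{\dual p}$, reindexed by $i\leftarrow l-i$ so that $\xi_i$ is the coefficient of $f_i$ in $w(s_{\dual p})$, with $\xi_l=\xi_0=1$. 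The crucial simplification is that the matched edge $d(x_j)=e_{l-j}$ is incident in $\mc K$ to exactly the two consecutive path faces $f_{l-j}$ and $f_{l-j+1}$, so $\inner{\partial_2 w_{j-1}}{d(x_j)}$ collapses to its single surviving term $\xi_{j-1}\,\iota(f_{l-j+1},e_{l-j})$. The recursion therefore reduces to $\xi_a=-\bigl(\iota(f_{a+1},e_a)/\iota(f_a,e_a)\bigr)\xi_{a+1}$ for $a=l-1,\dots,1$, whose solution is $\xi_1=(-1)^{\,l-1}\prod_{a=1}^{l-1}\iota(f_{a+1},e_a)/\iota(f_a,e_a)$. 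Substituting this $\xi_1$ and $\xi_l=1$ into $-\iota(\xi_l f_l,e')/\iota(\xi_1 f_1,e)$ and using $1/\iota=\iota$, the two products rearrange into exactly the telescoped expression above, proving \cref{eq:transformation}.

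Finally, \cref{eq:cocycle.face} is a local computation on each interior face $f_i$ with $i\le l-1$. Such an $f_i$ meets in $\mc K$ the two path edges $e_{i-1}$ and $e_i$ and a third edge which cannot be any $e_j$ (that would force $f_i\in\{f_j,f_{j+1}\}$, i.e.\ $e_j\in\{e_{i-1},e_i\}$), so $\cochain g$ vanishes there; hence $\inner{\cochain g}{\partial_2 f_i}=\iota(f_i,e_{i-1})\inner{\cochain g}{e_{i-1}}+\iota(f_i,e_i)\inner{\cochain g}{e_i}$. Inserting the update $\inner{\cochain g}{e_i}=-\bigl(\iota(f_i,e_i)/\iota(f_i,e_{i-1})\bigr)\inner{\cochain g}{e_{i-1}}$ together with $\iota(f_i,e_i)^2=1$ and $1/\iota(f_i,e_{i-1})=\iota(f_i,e_{i-1})$ makes the two terms cancel; scaling by $\xi_i$ yields \cref{eq:cocycle.face}. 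I expect the only genuine obstacle to be bookkeeping: aligning the forward loop index $i$ of \cref{alg:cocycle} with the reversed Morse index through $i\leftarrow l-i$, verifying that the surviving term of $\inner{\partial_2 w_{j-1}}{d(x_j)}$ is exactly the claimed one, and checking that the interior faces $f_1,\dots,f_{l-1}$ are precisely those for which the cancellation is automatic, while the closing face $f_l$ is deliberately excluded from \cref{eq:cocycle.face} (it is there that the sign of the telescoped product will later encode orientability and the torsion generator).
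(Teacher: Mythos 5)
Your proposal is correct and follows essentially the same route as the paper's proof: both extract the two-term recursion from line~4 of \cref{alg:cocycle}, identify the sign-flip relation $\iota(\xi_{i+1}f_{i+1},e_i)=-\iota(\xi_i f_i,e_i)$ coming from the Morse coefficients of $s_{\dual p}$ (you obtain it by explicitly collapsing $\inner{\partial_2 w_{j-1}}{d(x_j)}$ to its single surviving term, exactly as the paper implicitly does), and prove \cref{eq:cocycle.face} by the same local cancellation on each interior face. The only difference is presentational: you solve both recursions into closed telescoped products and compare them, whereas the paper applies the recursion step by step; your explicit remark that the third edge of an interior face $f_i$ cannot be a path edge is a detail the paper leaves implicit.
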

\begin{proof}
By the definition of the $\cochain g$ in \cref{alg:cocycle} at lines 1 and 4, we have that $\inner{\cochain g}{e}=1$, and for each $i \in \{1, \dots, l\}$, the following holds
\begin{align}
\inner{\cochain g}{e_{i}} = -\frac{\iota(f_{i},e_{i})}{\iota(f_{i}, e_{i-1})} \inner{\cochain g}{e_{i-1}},
\label{eq:transformation.b}
\end{align}
where we define $e_0\coloneqq e$, $e_l \coloneqq e'$ as the initial and final $1$-cells, respectively.
This formula expresses the transformation of the $1$-cochain as we move through the sequence of $1$-cells in $\dual p$.

Note that \cref{eq:transformation.b} is independent of the specific orientation of each $2$-cell $f_i$.
In fact, for any coefficient $\chi_i \in \{-1, +1\}$ corresponding to an oriented $2$-cell $\chi_i f_i$, we have $\iota(\chi_i f_i,e_{i}) = \chi_i \iota(f_i,e_{i})$.
This shows that the expression in \cref{eq:transformation.b} remains unchanged under the orientation change of the $2$-cells.

Next, we multiply each $2$-cell $f_i$ by the coefficient $\xi_i$, computed from the sequence $s_{\dual p}$ via \cref{eq:coefficient}.
Note that
\[
\iota(\xi_{i+1} f_{i+1},e_i) = -\iota(\xi_i f_i,e_i)
\]
for all $i\in\{1,\dots,l-1\}$; this follows from the choice of coefficients $\xi_i$ in \cref{eq:coefficient} and from the recursion formula \cref{eq:recursion}.
Therefore, applying \cref{eq:transformation.b} recursively yields the desired result \cref{eq:transformation}.

To prove \cref{eq:cocycle.face}, we rewrite \cref{eq:transformation.b} as the following equation:
\begin{equation*}
\inner{\cochain g}{e_i} \iota(\xi_i f_{i},e_i) + \inner{\cochain g}{e_{i-1}} \iota(\xi_i f_{i}, e_{i-1}) = 0,
\end{equation*}
for each $i \in \{1, \dots, l-1\}$.
This is simply a restatement of the transformation rule for the $1$-cochain.

Finally, recall that $\cochain g = \cochain 0$ as initialized in \cref{alg:cocycle} at line 1, and note that the $2$-cells $f_i$ are distinct since $\dual p$ is a path.
By linearity and by the definition of the boundary map $\partial_{2}$ we thus have
\[
\inner{\cochain g}{\iota(\xi_i f_{i},e_i)\,e_i + \iota(\xi_i f_{i}, e_{i-1})\,e_{i-1}}
= \inner{\cochain g}{\partial_2(\xi_i f_i)} = 0,
\]
which is the desired equation in \cref{eq:cocycle.face}.
\end{proof}

\subsection{Definition of a new CW complex $\bar{\K}$ with empty boundary and its reduction to a homotopy-equivalent CW complex $\bar \K_M$}
\label{sec:reduction}
We first define the CW complex $\bar \K$, constructed from $\K$ as follows.

For each connected component of the boundary $\partial \K_k$ with $k \in \{1, \dots, N^{\ho}\}$, we do the following operation: we consider a gluing map (in the CW sense as in \cref{def:CW_complex}) attaching a closed disk $D_k$ (topologically a $2$-ball) along the (polygonal) boundary $\partial \K_k$, and we contract $D_k$ to a single $0$-cell, say $n_k$.
Accordingly, the CW complex structure of $\bar \K$ is obtained from that of $\K$ by removing every $1$-cell of $\partial \K$ and by identifying, for each $k \in \{1, \dots, N^{\ho}\}$, every $0$-cell in $\partial \K_k$ with the $0$-cell $n_k$ in $\bar \K$.

Let $T$ be the spanning tree of $\K$ defined in \cref{alg:handles}, lines 1--2, and let $\dual T$ be the tree of the dual graph $\dual G$ of $\K$, defined in lines 3--7 of the same algorithm.
These trees also induce corresponding spanning trees of $\bar \K$ and of its dual graph $\dual G$ (the latter because $\bar \K$, by construction, has empty boundary), which we denote by the same symbols for simplicity.

Let $ M_T $ and $ M_{\dual T} $ be the acyclic matchings constructed from the spanning trees $ T $ and $ \dual T $, respectively, by applying \cref{lem:tree.acyclic} to $ \bar \K $.

Applying \cref{thm:main} to $ \bar \K $ and the acyclic matching $ M = M_{\dual T} \cup M_T $, we obtain a CW complex $ \bar \K_M $ that is homotopy equivalent to $ \bar \K $, with its cells naturally indexed by the set of critical cells of $ \bar \K $ with respect to $M$.

Since $ T $ is a spanning tree of $ \bar \K $, each edge of $ T $ is matched in $ M_T $ to a vertex (a $0$-cell) of $ \bar \K $.
As a result, $ M_T $ consists of $ \abs{\bar \K^{(0)}} - 1 $ pairs.
Similarly, since $ \dual T $ is a spanning tree of $ \dual G $, each edge of $ \dual T $ is matched in $ M_{\dual T} $ to a vertex (a $2$-cell, by duality) of $ \dual G $.
Therefore, $ M_{\dual T} $ consists of $\abs{\bar \K^{(2)}} - 1 $ pairs.

Consequently, $ \bar \K_M $ consists of:
\begin{itemize}
    \item One $0$-cell, which is the unique non-matched vertex of $ \bar \K $ in $ M_T $.
\item $ \abs{\bar \K^{(1)}} - (\abs{\bar \K^{(0)}} - 1) - (\abs{\bar \K^{(2)}} - 1) = 2 - \chi $ $1$-cells, where $ \chi $ is the Euler characteristic of $ \bar \K $.
\item One $2$-cell, which is the unique non-matched vertex (again, by duality) of $ \dual G $ in $ M_{\dual T} $.
\end{itemize}

It is important to note that the set of all $1$-cells of $ \bar \K_M $ is in one-to-one correspondence with the set $ E_M $, as defined in \cref{alg:handles} at line 8. For the remainder of this paper, we will identify these two sets.

Now, let $ f $ denote the unique $2$-cell of $ \bar \K_M $, and consider the expression for the boundary map $ \partial^M_2 $ on the CW complex $ \bar \K_M $ with incidence numbers $ \iota^M(f, e) $, as given by \cref{eq:morse.formula}.
Specifically, we have
\begin{equation}
\partial^M_2 f = \sum_{e \in E_M} \iota^M(f, e) \, e,
\label{eq:expansion}
\end{equation}
where $ \iota^M(f, e) $ is the incidence number associated with the boundary of $ f $ along the $1$-cell $e$.

Since $ \bar \K_M $ is homotopy equivalent to $ \bar \K $ (by \cref{thm:main}) and the boundary of $ \bar \K $ is empty, we conclude that each coefficient $ \iota^M(f, e) $ in the above sum can only take one of two possible values:
\begin{enumerate}
    \item $ \iota^M(f, e) = 0 $.
\item $ \iota^M(f, e) = 2\eta_e $, where $ \eta_e \in \{-1, +1\} $.
\end{enumerate}
Therefore, we can partition $ E_M $ into two subsets: $ E_M = E_M^{\rm I} \sqcup E_M^{\rm II} $, where $ E_M^{\rm I} $ and $ E_M^{\rm II} $ consist of the edges $ e \in E_M $ that satisfy conditions 1. and 2., respectively.

\subsection{Differentiation between homology and torsion generators of $H_1(\bar \K;\Z)$ passing through $\bar \K_M$}
\label{sec:differentiation}
The characterization of the homology of $\bar \K$ is an immediate consequence of the Classification Theorem of Compact Surfaces \cite{gallier2013guide}.
In the next Lemma we provide an equivalent characterization which will be useful for our specific purposes.
\begin{lemma}[Homology characterization of $\bar \K$]
\label{lem:homology}
For each $e \in E_M$, let $c_e$ be the unique cycle in the graph $T \cup \{e\}$, naturally oriented according to the orientation induced by $e$.
Note that $c_e$ can considered as a $1$-cycle in $\bar{\K}$.
Let $E_M = E_M^{\rm I} \sqcup E_M^{\rm II}$, and denote by $g := \abs{E_M} = 2 - \chi$ the genus of $\bar \K_M$.
We distinguish between the following two cases:
\begin{enumerate}[label=(\roman*)]
\item If $\abs{E_M^{\rm II}} = 0$, then $H_1(\bar \K; \Z) \cong \Z^g$.
\item If $\abs{E_M^{\rm II}} > 0$, then $H_1(\bar \K;\Z)\cong\Z^{g-1} \oplus \Z/2\Z$.
In this case, the torsion generator $\torsion c$ is given by 
\begin{equation}
\torsion c := \sum_{e \in E_M^{\rm{II}}} \eta_e\, c_e,
\label{eq:torsion}
\end{equation}
where each $\eta_e \in \{-1,+1\}$ is the sign of the coefficient $\iota^M(f,e)$ in the expression of $\partial^M_2 f$ as a linear combination of $1$-cells $e \in E_M^{\rm{II}}$ in \cref{eq:expansion}.
\end{enumerate}
\end{lemma}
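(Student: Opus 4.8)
The plan is to compute $H_1(\bar\K;\Z)$ by passing to the reduced Morse complex $\bar\K_M$, whose homology agrees with that of $\bar\K$ by \cref{thm:main}, and then transporting the answer back to cycles in $\bar\K$. By the reduction carried out in \cref{sec:reduction}, the complex $\bar\K_M$ has a single $0$-cell, the $g=2-\chi=\abs{E_M}$ critical $1$-cells (identified with $E_M$), and the single $2$-cell $f$, so its cellular chain complex reads $0\to\Z\xrightarrow{\partial_2^M}\Z^{g}\xrightarrow{\partial_1^M}\Z\to 0$. Because $\bar\K_M$ has exactly one vertex, every critical $1$-cell is a loop and hence $\partial_1^M=0$; thus $Z_1(\bar\K_M;\Z)=\Z^g$ and $H_1(\bar\K;\Z)\cong\Z^{g}/\operatorname{im}\partial_2^M$.

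Next I would feed in the coefficient dichotomy proved in \cref{sec:reduction}: $\iota^M(f,e)=0$ for $e\in E_M^{\rm I}$, while $\iota^M(f,e)=2\eta_e$ with $\eta_e\in\{-1,+1\}$ for $e\in E_M^{\rm II}$. Setting $w\coloneqq\sum_{e\in E_M^{\rm II}}\eta_e\,e\in\Z^g$, the expansion \cref{eq:expansion} becomes $\partial_2^M f=2w$. Case (i) is then immediate: if $E_M^{\rm II}=\emptyset$ then $\partial_2^M f=0$, so $\operatorname{im}\partial_2^M=0$ and $H_1(\bar\K;\Z)\cong\Z^g$. For case (ii), the decisive point is that $w$ is a \emph{primitive} vector of $\Z^g$, since its nonzero coordinates are all $\pm1$ and $E_M^{\rm II}\neq\emptyset$; hence $w$ extends to a $\Z$-basis of $\Z^g$, in which $\operatorname{im}\partial_2^M=\langle 2w\rangle$ is the subgroup generated by twice a single basis vector. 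Therefore $H_1(\bar\K;\Z)\cong\Z^{g-1}\oplus\Z/2\Z$, with the torsion class represented by $[w]$.

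It remains to rewrite these classes in terms of the fundamental cycles $c_e$. For this I would use the explicit chain-level map underlying \cref{thm:main}, which sends each critical $1$-cell to a $1$-cycle of $\bar\K$ obtained by appending to $e$ the gradient (V-path) corrections coming from the matched pairs of $M_T$. For a matching induced by a spanning tree these corrections are exactly the tree edges joining the two endpoints of $e$, so the critical cell $e$ is carried to the fundamental cycle $c_e$ of $T\cup\{e\}$. Transporting the free generators then yields the stated decomposition, and transporting the torsion representative $w$ yields $\torsion c=\sum_{e\in E_M^{\rm II}}\eta_e\,c_e$, as claimed.

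I expect the identification in the last paragraph to be the main obstacle. The quotient computation $\Z^g/\langle 2w\rangle$ is routine linear algebra over $\Z$, but certifying that the Morse isomorphism of \cref{thm:main} sends the class of the critical edge $e$ precisely to the class of $c_e$ requires tracking the V-paths of the tree matching $M_T$ and checking that their contribution assembles into the fundamental cycle; the orientation bookkeeping carried by the signs $\eta_e$ is where care will be needed.
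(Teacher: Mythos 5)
Your proposal is correct and follows essentially the same route as the paper: pass to the Morse complex $\bar\K_M$ (one $0$-cell, the critical $1$-cells $E_M$, one $2$-cell), use the coefficient dichotomy for $\iota^M(f,e)$ to compute $H_1$ as $\Z^g/\operatorname{im}\partial_2^M$, and transport the generators back to $\bar\K$ via the homotopy equivalence, identifying each critical edge $e$ with the fundamental cycle $c_e$ of $T\cup\{e\}$. Your explicit primitivity argument for $w=\sum_{e\in E_M^{\rm II}}\eta_e\,e$ (and hence $\Z^g/\langle 2w\rangle\cong\Z^{g-1}\oplus\Z/2\Z$) actually makes precise a step the paper's proof treats informally, and your flagged concern about tracking the chain-level identification of critical cells with the cycles $c_e$ is exactly the point the paper also handles only by appeal to the homotopy equivalence.
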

\begin{proof}
Since $\bar{\K}_M$ has only one $0$-cell, every $1$-cell $e \in E_M$ forms a $1$-cycle.

In the first case, when $\abs{E_M^{\mathrm{II}}} = 0$, we have $E_M = E_M^{\mathrm{I}}$.
Therefore, each $1$-cell $e \in E_M$ is a homology generator of $H_1(\bar{\K}_M; \mathbb{Z})$, since $\partial_2^M f \neq e$.
Using the fact that $\bar{\K}$ is homotopy equivalent to $\bar{\K}_M$, we conclude that each $1$-cycle $c_e$ is also a homology generator of $H_1(\bar{\K}; \mathbb{Z})$.

In the second case, where $\abs{E_M^{\mathrm{II}}} > 0$, fix a $1$-cell $e^* \in E_M^{\mathrm{II}}$.
The boundary operator $\partial_2^M f$ satisfies
\[
\partial_2^M f = \sum_{e \in E_M^{\mathrm{II}}} 2 \eta_e e \eqqcolon 2c^*,
\]
where $c^*$ is the (unique) torsion generator of $\bar{\K}_M$.

For each $e \in E_M \setminus \{e^*\}$, the $1$-cell $e$ is a homology generator of $H_1(\bar{\K}_M; \Z)$, since $\partial_2^M f \neq e$.
This means there are $g - 1$ independent homology generators.

Next, let $c_e$ be the unique $1$-cycle of $\bar{\K}$ associated with each $e \in E_M$, as in the case where $\abs{E_M^{\mathrm{II}}} = 0$, and define the $1$-cycle $\torsion c$ as in \cref{eq:torsion}.
Using again the fact that $\bar{\K}$ is homotopy equivalent to $\bar{\K}_M$, each $c_e$ for $e \in E_M \setminus \{e^*\}$ is a homology generator of $H_1(\bar{\K}; \mathbb{Z})$.
Furthermore, $\torsion c$ is the torsion generator of $H_1(\bar{\K}; \mathbb{Z})$, since $\torsion c$ is homotopy equivalent to the torsion $1$-cycle $c^*$ in $\bar{\K}_M$.
\end{proof}

\subsection{Characterization of generators of $H^1(\bar \K)$}
\label{sec:characterization}
Let us remind the Universal Coefficient Theorem for cohomology.
\begin{theorem}[{\cite[Theorem 3.2]{hatcher}}]
\label{thm:universal}
If a chain complex $C$ of free Abelian groups has homology groups $H_n(C;G)$, then the cohomology groups $H^n(C;G)$ of the cochain complex $Hom(C;G)$ are determined by the exact sequence
\begin{equation*}
0 \xrightarrow{} \mathrm{Ext}(H_{n-1}(C;G);G) \xrightarrow{} H^n(C;G) \xrightarrow{h} \mathrm{Hom}(H_n(C;G);G) \xrightarrow{} 0.
\end{equation*}
\end{theorem}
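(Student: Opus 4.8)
The statement is the Universal Coefficient Theorem for cohomology, so the plan is to reproduce its classical homological-algebra proof, exploiting that every chain group $C_n$ is free Abelian. First I would fix the subgroups $Z_n = \ker \partial_n$ and $B_n = \mathrm{im}\,\partial_{n+1}$ and observe that, since $C_{n-1}$ is free Abelian, its subgroup $B_{n-1}$ is also free Abelian. Consequently the short exact sequence
\[
0 \to Z_n \to C_n \xrightarrow{\partial_n} B_{n-1} \to 0
\]
splits in each degree, because $B_{n-1}$ is free, hence projective. Assembling these over all $n$, and regarding $Z = (Z_n)$ and $B' = (B_{n-1})$ as chain complexes carrying zero differentials, yields a degreewise-split short exact sequence of chain complexes $0 \to Z \to C \to B' \to 0$.

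Next I would apply the contravariant functor $\mathrm{Hom}(-,G)$. Because the sequence splits in each degree (so $C_n \cong Z_n \oplus B_{n-1}$), applying $\mathrm{Hom}(-,G)$ preserves degreewise exactness and produces a short exact sequence of cochain complexes
\[
0 \to \mathrm{Hom}(B',G) \to \mathrm{Hom}(C,G) \to \mathrm{Hom}(Z,G) \to 0.
\]
Since $Z$ and $B'$ have zero differentials, the cohomology of $\mathrm{Hom}(Z,G)$ in degree $n$ is just $\mathrm{Hom}(Z_n,G)$, and that of $\mathrm{Hom}(B',G)$ is $\mathrm{Hom}(B_{n-1},G)$. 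I would then write down the associated long exact cohomology sequence and, via the standard snake-lemma description of the connecting homomorphism, identify the coboundary map $\mathrm{Hom}(Z_n,G) \to \mathrm{Hom}(B_n,G)$ with $i_n^* = \mathrm{Hom}(i_n,G)$, the dual of the inclusion $i_n : B_n \hookrightarrow Z_n$. The long exact sequence then collapses into the short exact sequences
\[
0 \to \mathrm{coker}\,i_{n-1}^* \to H^n(\mathrm{Hom}(C,G)) \to \ker i_n^* \to 0.
\]

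Finally I would interpret the two outer terms. The short exact sequence $0 \to B_n \xrightarrow{i_n} Z_n \to H_n(C;G) \to 0$ is, by freeness of $B_n$ and $Z_n$, a free resolution of length one of $H_n(C;G)$; applying $\mathrm{Hom}(-,G)$ to it identifies $\ker i_n^*$ with $\mathrm{Hom}(H_n(C;G),G)$ and $\mathrm{coker}\,i_n^*$ with $\mathrm{Ext}(H_n(C;G),G)$, the latter by the very definition of $\mathrm{Ext}$ as the first derived functor of $\mathrm{Hom}(-,G)$ computed from this resolution. Substituting $\mathrm{coker}\,i_{n-1}^* \cong \mathrm{Ext}(H_{n-1}(C;G),G)$ and $\ker i_n^* \cong \mathrm{Hom}(H_n(C;G),G)$ yields the claimed exact sequence, with the map $h$ being the evaluation/restriction to cycles. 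The \textbf{main obstacle} is the middle step: correctly computing the connecting homomorphism and verifying that it coincides, up to sign, with the dualized inclusion $i_n^*$, since the entire derived-functor interpretation of its kernel and cokernel hinges on recognizing this map precisely.
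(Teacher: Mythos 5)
Your proposal is the standard proof of the Universal Coefficient Theorem and is correct; the paper itself gives no proof of this statement, simply citing \cite[Theorem 3.2]{hatcher}, and your argument (degreewise splitting of $0 \to Z \to C \to B' \to 0$ by freeness of the boundaries, dualizing, and identifying the connecting homomorphism with $i_n^*$ so that its kernel and cokernel become $\mathrm{Hom}(H_n,G)$ and $\mathrm{Ext}(H_{n-1},G)$) is precisely the proof given in that cited reference. Nothing further is needed.
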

For our purposes, we do not need to delve into the definition of the $\mathrm{Ext}$ functor.
It suffices to note the following property: $\mathrm{Ext}(Q;G) = 0$ when $Q$ is a free group.
For further details and a proof of this property, consult \cite{hatcher}.

We now apply \cref{thm:universal} to $\bar{\K}$ with $n = 1$ and $G = \Z$.
In \cref{thm:universal}, the map $h: H^1(\bar{\K};\Z) \to \mathrm{Hom}(H_1(\bar{\K};\Z); \Z)$ assigns to each generator $\cochain g \in H^1(\bar{\K};\Z)$ the function $\inner{\cochain g}{\cdot}$, which maps an element $c \in H_1(\bar{\K};\Z)$ to the value $\inner{\cochain g}{c}$.

Note that if $\mathrm{Ext}(H_0(\bar{\K}; \Z); \Z) \cong 0$, then $h$ is an isomorphism.
Since we assume $\mc{\K}$ is connected, $\bar{\K}$ is also connected, implying that $H_0(\bar{\K};\Z) \cong \Z$.
Consequently, $\mathrm{Ext}(H_0(\bar{\K};\Z); \R) \cong 0$, and from the exactness of the sequence in \cref{thm:universal}, we conclude that $h : H^1(\bar{\K};\Z) \to \mathrm{Hom}(H_1(\bar{\K};\Z); \Z)$ is an isomorphism.

As a result, we obtain the following characterization of generators of $H^1(\bar{\K};\R)$ with real coefficients in terms of a set of generators of $H_1(\bar{\K}; \Z)$.
\begin{corollary}[Characterization of cohomology generators]
\label{cor:characterization}
Let $\{c_i\}_{i=1}^{\beta_1}$ be a set of homology generators of $H_1(\bar \K; \Z)$, and let $\{\cochain g_i\}_{i=1}^{\beta_1}$ be a set of $1$-cocycles of $\bar \K$ such that
\begin{align}
\inner{\cochain g_i}{c_j} &= \delta_{i,j},
\label{eq:universal}
\end{align}
for all $i,j \in \{1, \dots, \beta_1\}$, where $\delta_{i,j}$ is the Kronecker delta.
Then, the set $\{\cochain g_i\}_{i=1}^{\beta_1}$ forms a basis of $H^1(\bar \K; \Z)$.
In particular, the same set yields a basis for $H^1(\bar \K; \R)$.
\end{corollary}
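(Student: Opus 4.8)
The plan is to run the dualization argument already prepared by the isomorphism $h : H^1(\bar{\K};\Z)\to\mathrm{Hom}(H_1(\bar{\K};\Z);\Z)$ established just above from the Universal Coefficient Theorem, and to show that the relations \cref{eq:universal} are precisely what makes $\{\cochain g_i\}$ the pullback of a dual basis. First I would compute the target group explicitly. By \cref{lem:homology}, $H_1(\bar{\K};\Z)$ is isomorphic either to $\Z^{\beta_1}$ or to $\Z^{\beta_1}\oplus\Z/2\Z$, where in both cases $\beta_1$ is the rank of the free part. Since $\mathrm{Hom}(\Z/2\Z;\Z)=0$, every homomorphism out of the torsion subgroup vanishes, so applying $\mathrm{Hom}(-;\Z)$ to the exact sequence $0\to\Z/2\Z\to H_1(\bar{\K};\Z)\to\Z^{\beta_1}\to 0$ yields $\mathrm{Hom}(H_1(\bar{\K};\Z);\Z)\cong\Z^{\beta_1}$, free of rank $\beta_1$, with the $\beta_1$ free generators $c_1,\dots,c_{\beta_1}$ descending to a $\Z$-basis of $H_1(\bar{\K};\Z)$ modulo torsion.

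Next I would verify that each $h(\cochain g_i)=\inner{\cochain g_i}{\cdot}$ is a well-defined element of $\mathrm{Hom}(H_1(\bar{\K};\Z);\Z)$ and is fully determined by its values on the $c_j$. Indeed, for a cocycle $\cochain g$ the evaluation annihilates boundaries, since $\inner{\cochain g}{\partial_2 c}=\inner{\delta^1\cochain g}{c}=0$, and it annihilates torsion as well: if $2\torsion c$ is a boundary, as in \cref{eq:torsion} and \cref{eq:expansion}, then $2\inner{\cochain g}{\torsion c}=0$ forces $\inner{\cochain g}{\torsion c}=0$ in $\Z$. Thus $h(\cochain g_i)$ is determined by its restriction to $c_1,\dots,c_{\beta_1}$, and the relations $\inner{\cochain g_i}{c_j}=\delta_{i,j}$ in \cref{eq:universal} state exactly that $h(\cochain g_1),\dots,h(\cochain g_{\beta_1})$ form the dual basis of $\mathrm{Hom}(H_1(\bar{\K};\Z);\Z)\cong\Z^{\beta_1}$. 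Because $h$ is an isomorphism, pulling back this dual basis shows that $\{\cochain g_i\}_{i=1}^{\beta_1}$ is a $\Z$-basis of $H^1(\bar{\K};\Z)$.

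To pass to real coefficients I would exploit that $H^1(\bar{\K};\Z)$ is now known to be free of rank $\beta_1$. The real cochain complex satisfies $C^\bullet(\bar{\K};\R)\cong C^\bullet(\bar{\K};\Z)\otimes_{\Z}\R$, and since $\R$ is torsion-free, hence flat over the PID $\Z$, tensoring commutes with cohomology, giving $H^1(\bar{\K};\R)\cong H^1(\bar{\K};\Z)\otimes_{\Z}\R\cong\R^{\beta_1}$. The integral basis $\{[\cochain g_i]\}$ therefore becomes an $\R$-basis after tensoring; equivalently, reading \cref{eq:universal} over $\R$ identifies the $\cochain g_i$ with the dual basis of $H^1(\bar{\K};\R)\cong\mathrm{Hom}(H_1(\bar{\K};\R);\R)$. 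Hence the same set of cocycles is a basis of $H^1(\bar{\K};\R)$.

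I expect the only genuinely delicate point to be the torsion bookkeeping in the second paragraph: one must check that the $\beta_1$ conditions involving only the free generators $c_j$ suffice to pin down each $h(\cochain g_i)$ as a complete element of $\mathrm{Hom}(H_1(\bar{\K};\Z);\Z)$. This is exactly where $\mathrm{Hom}(\Z/2\Z;\Z)=0$ does the work, forcing the a priori undetermined value of $h(\cochain g_i)$ on the torsion generator to vanish automatically. The remaining steps are routine dualization and a flatness argument for the change of coefficients.
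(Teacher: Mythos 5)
Your proposal is correct and follows essentially the same route as the paper: the paper states \cref{cor:characterization} as an immediate consequence of the isomorphism $h:H^1(\bar\K;\Z)\to\mathrm{Hom}(H_1(\bar\K;\Z);\Z)$ obtained from \cref{thm:universal}, with \cref{eq:universal} identifying the $h(\cochain g_i)$ as the dual basis, exactly as you argue. Your explicit torsion bookkeeping (that $\mathrm{Hom}(\Z/2\Z;\Z)=0$ forces $\inner{\cochain g_i}{\torsion c}=0$) and the flatness argument for passing to $\R$ are careful elaborations of steps the paper leaves implicit, and both are sound.
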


\subsection{A first decomposition and proof of correctness of \cref{alg:handles}} 

Recall the definition of the trees $T$ and $\dual T$ defined in \cref{alg:handles} (lines 1--7).
Each connected component $\partial\K_k$ of the boundary $\partial\K$ is a $1$-cycle with $\abs{\partial\K_k^{(1)}}$ edges, and any spanning tree of such a cycle has $\abs{\partial\K_k^{(1)}}-1$ edges, hence $T$ contains all but one edge of each $\partial\K_k$, justifying the choice at line 5. Therefore no boundary edge belongs to the set $E_M$, validating the selection at line 10.

Since $\dual T$ is built from dual edges associated with edges not in $E_M$, and each path $\dual p_e$ in $\dual T$, defined at lines 10--11, connects dual nodes associated with faces, it follows that each $\dual p_e$ contains no boundary edge of $\K$.
By the formula in \cref{alg:cocycle} at line 4, the support of any $1$-cochain $\cochain g$ produced from these paths contains no boundary edges of $\K$.
Consequently, if $\cochain g$ is a $1$-cocycle on $\K$, then it is also a $1$-cocycle on $\bar\K$.
\begin{figure}   
\includegraphics[width=1.5\textwidth,center]{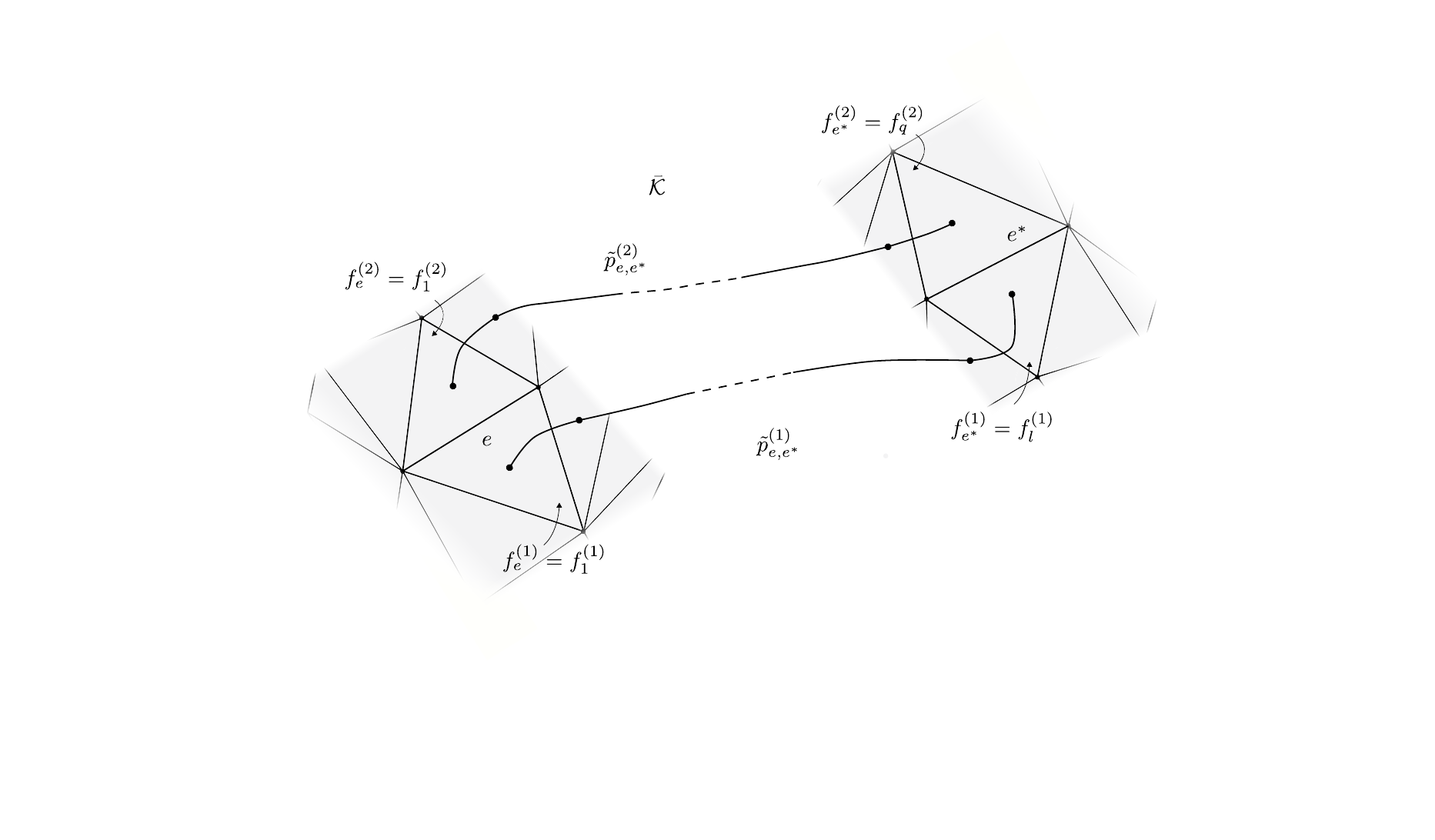}
\caption{Illustration of the notation used in the proof of Point (ii) in \cref{lem:equivalence} and in \cref{alg:handles}.}
\label{fig:dual}
\end{figure}

\begin{lemma}[Properties of $1$-cochains of \cref{alg:handles}]
\label{lem:equivalence}
Recall the definition of the set $E_M$ in \cref{alg:handles}, line 8, and consider the partition $E_M = E_M^{\rm I} \sqcup E_M^{\rm II}$ introduced at the end of \cref{sec:reduction}.
Let $e \in E_M$.
The following two assertions hold:
\begin{enumerate}[label=(\roman*)]
\item Let $\dual p_e$ be the path defined in \cref{alg:handles} at lines 10 and 11, and let $\cochain g_e^{\ha}$ be the corresponding $1$-cochain of $\bar \K$ computed at line 12 using \cref{alg:cocycle}.
Then, $e \in E_M^{\rm I}$ if and only if $\cochain g_e^{\ha}$ is a $1$-cocycle of $\bar \K$ whose restriction to $\bar \K_M$ satisfies the condition
\begin{equation}
\inner{\cochain g_e^{\ha}}{e'}=\delta_{e,e'}
\label{eq:condition.a}
\end{equation}
for every $e' \in E_M$.
\item Assume $|E_M^{\rm{II}}|>0$, and let $e^*$ the $1$-cell selected in \cref{alg:handles} at line 18.
For each $e \in E_M^{\rm{II}} \setminus\{e^*\}$, let $\dual p_{e,e^*}^{(1)}, \dual p_{e,e^*}^{(2)}$ be the paths defined in \cref{alg:handles}, at lines 20--23.
Let $\cochain g_e^{(1)}$ and $\cochain g_e^{(2)}$ denote the corresponding $1$-cochains of $\bar \K$ computed at lines 24 and 25 using \cref{alg:cocycle}, and let $\cochain g_e^{\ha}$ be the $1$-cochain defined at line 26.
Then, $\cochain g_e^{\ha}$ is a $1$-cocycle of $\bar \K$ whose restriction to $\bar \K_M$ satisfies the condition
\begin{equation}
\inner{\cochain g_e^{\ha}}{e'}=\delta_{e,e'}
\label{eq:condition.b}
\end{equation}
for every $e' \in E_M^{\rm{II}} \setminus\{e^*\}$.
\end{enumerate}
\end{lemma}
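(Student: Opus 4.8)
\emph{Proof plan.} I would handle both parts by the same scheme: read off the support of $\cochain g_e^{\ha}$, localize its coboundary with \cref{lem:cocycle}, and then match the residual coboundary to the Morse incidence numbers $\iota^M(f,e)$ of \cref{eq:expansion}. The support step is immediate: every intermediate edge of the paths $\dual p_e,\dual p_{e,e^*}^{(1)},\dual p_{e,e^*}^{(2)}$ is $D^{-1}$ of a dual edge of $\dual T$ and hence lies outside $E_M$, so, together with $\inner{\cochain g_e^{\ha}}{e}=1$ from \cref{lem:cocycle}, the only edge of $E_M$ (resp. of $E_M^{\rm II}\setminus\{e^*\}$) meeting the support is $e$ itself; this already yields \cref{eq:condition.a} (resp. \cref{eq:condition.b}) whenever $\cochain g_e^{\ha}\neq\cochain 0$. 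For part (i) the pair is $(e,e)$ with $e\subset f_1=f_e^{(1)}$ and $e\subset f_l=f_e^{(2)}$, so \cref{eq:cocycle.face} forces $\delta^1\cochain g_e^{\ha}$ to vanish on $f_1,\dots,f_{l-1}$, while faces off $\dual p_e$ share no edge with the support; hence $\delta^1\cochain g_e^{\ha}$ is carried by the single face $f_e^{(2)}$, and a short computation from \cref{eq:transformation} evaluates it as $\iota(f_e^{(2)},e)\,(1-v_e)$, where $v_e\in\{-1,+1\}$ is the value propagated back to $e$ around the dual loop. Thus $\cochain g_e^{\ha}$ is a cocycle of $\bar\K$ exactly when $v_e=1$, whereas $v_e=-1$ is the case in which \cref{alg:cocycle} returns $\cochain 0$ and $e$ is placed in $E_M^{\rm II}$ at line 14 of \cref{alg:handles}.

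It then remains to prove $v_e=1\iff e\in E_M^{\rm I}$, i.e. that the local defect $\iota(f_e^{(2)},e)(1-v_e)\in\{0,\pm2\}$ equals the global coefficient $\iota^M(f,e)$. I would pair $\cochain g_e^{\ha}$ with the Morse boundary: the Kronecker identity gives $\inner{\cochain g_e^{\ha}}{\partial_2^M f}=\iota^M(f,e)$, and expanding the unique critical $2$-cell $f$ through the gradient flow of \cref{thm:main} (formula \cref{eq:morse.formula}) rewrites this pairing as $\delta^1\cochain g_e^{\ha}$ evaluated on that expansion; since $\delta^1\cochain g_e^{\ha}$ lives on $f_e^{(2)}$ alone and $f_e^{(2)}$ enters the expansion with unit coefficient, the pairing returns $\pm\iota(f_e^{(2)},e)(1-v_e)$. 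Hence the residual vanishes precisely when $\iota^M(f,e)=0$, establishing (i) and simultaneously reconciling the two descriptions of $E_M^{\rm II}$. I expect this identification to be the main obstacle: the path $\dual p_e$ runs between the two faces of $e$, whereas the gradient flow computing $\iota^M(f,e)$ descends from the root $f$ down $\dual T$, so one must check that the recursion of \cref{alg:cocycle} really implements the Morse recursion \cref{eq:recursion}--\cref{eq:coefficient}, exhibiting $\cochain g_e^{\ha}$ as the pullback to $\bar\K$ of the Morse cochain dual to the critical $1$-cell $e$.

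For part (ii), each of $\cochain g_e^{(1)},\cochain g_e^{(2)}$ is nonzero (its value on $e^*$ is $\pm1$) and, by the same localization, fails to be a cocycle on only two faces: $\cochain g_e^{(1)}$ on $f_e^{(2)}$ and $f_{e^*}^{(2)}$, and $\cochain g_e^{(2)}$ on $f_e^{(1)}$ and $f_{e^*}^{(1)}$. The combination at line 26 of \cref{alg:handles} is engineered so that $\inner{\cochain g_e^{\ha}}{e}=1$ rather than $2$ while $\cochain g_e^{\ha}$ agrees with $\cochain g_e^{(1)}+\cochain g_e^{(2)}$ on every other edge, and a face-by-face check shows that three of the four potential defects cancel automatically: the contribution of $\cochain g_e^{(2)}$ annihilates the $f_e^{(2)}$-defect of $\cochain g_e^{(1)}$, and symmetrically at $f_e^{(1)}$ and $f_{e^*}^{(1)}$, leaving a single residual on $f_{e^*}^{(2)}$ proportional to $\inner{\cochain g_e^{(1)}}{e^*}-\inner{\cochain g_e^{(2)}}{e^*}$.

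The plan is to kill this last difference by orientation. The propagation sign around a closed dual loop defines the orientation character $H_1(\bar\K;\Z/2)\to\{\pm1\}$; its value on the loop $\dual p_e$ is $v_e$, equal to $-1$ exactly when $e\in E_M^{\rm II}$, and the closed loop $\dual p_{e,e^*}^{(1)}\cup D(e^*)\cup(\dual p_{e,e^*}^{(2)})^{-1}\cup D(e)$ is homologous mod $2$ to $\dual p_e+\dual p_{e^*}$ (the two $\dual T$-paths recombine in the tree). Hence its holonomy is $(-1)(-1)=+1$, and this equality is precisely $\inner{\cochain g_e^{(1)}}{e^*}=\inner{\cochain g_e^{(2)}}{e^*}$, making $\cochain g_e^{\ha}$ a genuine integral cocycle. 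As in part (i), the delicate point, and the crux of the whole lemma, is to anchor the equivalence $v_e=-1\iff e\in E_M^{\rm II}$ in the global Morse data of \cref{sec:reduction}, so that the two torsion edges $e,e^*$ really do recombine into an honest cocycle.
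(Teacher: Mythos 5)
Your proposal is correct and arrives at the same two compatibility conditions as the paper, but by a partly different route. For part (i) the two arguments essentially coincide: both localize the failure of the cocycle condition to the single terminal face $f_l=f_e^{(2)}$ and identify the residual there with the Morse incidence number $\iota^M(f,e)$. The paper does this directly, by noting that $S(f,e)$ contains only the sequence $s_{\dual p_e}$, so that \cref{eq:morse.formula} gives $\iota^M(f,e)=\iota(\xi_1 f_1,e)+\iota(\xi_l f_l,e)$, and then invoking \cref{eq:transformation} of \cref{lem:cocycle}; this is precisely the verification you flag as the main obstacle (that the recursion of \cref{alg:cocycle} implements \cref{eq:recursion}--\cref{eq:coefficient}). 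Your reformulation as a pairing of $\cochain g_e^{\ha}$ against the gradient-flow expansion of the critical $2$-cell is equivalent but not lighter: to make it rigorous you must check that the boundary of that expansion vanishes on the matched edges $e_1,\dots,e_{l-1}$ and that $\cochain g_e^{\ha}$ vanishes on the edges of $T$, which is the same sign bookkeeping the paper isolates in \cref{lem:cocycle}. For part (ii) your argument is genuinely different. The paper proves $\inner{\cochain g_e^{(1)}}{e^*}=\inner{\cochain g_e^{(2)}}{e^*}$ by computing both sides as $-\eta_{e^*}/\eta_e$ from the Morse incidence numbers of $e$ and $e^*$ (the value $\iota^M(f,e)=2\eta_e$ forces the two local incidences at $e$ to agree, and likewise at $e^*$). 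You instead observe that the propagation sign of \cref{alg:cocycle} around a closed dual loop factors as a product of per-dual-edge signs, hence defines a $\Z/2$-valued character on dual cycles; the loop $\dual p^{(1)}_{e,e^*}\cup D(e^*)\cup(\dual p^{(2)}_{e,e^*})^{-1}\cup D(e)$ differs from the sum of the two loops $\dual p_e\cup D(e)$ and $\dual p_{e^*}\cup D(e^*)$ by a mod-$2$ cycle supported on the tree $\dual T$, hence trivial, and part (i) identifies $E_M^{\rm II}$ with the loops of holonomy $-1$, so the combined holonomy is $(-1)(-1)=+1$, which is exactly the required equality. This orientation-character argument is more conceptual---it explains why two torsion edges recombine into an honest cocycle---at the cost of justifying multiplicativity of the holonomy and of leaning on part (i); the paper's computation is more local and self-contained. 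Your face-by-face cancellation at line 26, leaving a single residual on $f_{e^*}^{(2)}$ proportional to $\inner{\cochain g_e^{(1)}}{e^*}-\inner{\cochain g_e^{(2)}}{e^*}$, matches what the paper's compatibility conditions amount to, so I see no gap, only details to be written out.
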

\begin{proof} \smallskip
(i) Let $e \in E_M$.
Denote by $(f_1, \dots, f_l)$ the sequence of $2$-cells of $\dual p_e$, and by 
$(e_1, \dots, e_{l-1})$ the associated sequence of $1$-cells.
Notice that we can identify the unique $2$-cell $f$ of $\bar \K_M$ by setting $f := f_l$.
This follows from the fact that every $2$-cell of $\K$, except one, is matched in the acyclic matching $M$, and that the unique non-matched $2$-cell in $M$ can be chosen arbitrarily (as discussed in the property of $M_{\dual T}$ in \cref{lem:tree.acyclic}).
This also matches with the natural identification of cells of $\bar \K_M$ with the critical cells of $\bar \K$ as stated in \cref{thm:main}.

Let $s_{\dual p_e}$ be the sequence associated with the path $\dual p_e$ in $M$.
Applying Point (ii) in \cref{thm:main} to the pair $f$, $e$, we observe that $s_{\dual p_e}$ is the unique sequence in the set $S(f,e)$.
This is because $f_1,f_l$ are the only $2$-cells of $\bar \K$ incident on $e$, with $f:=f_l$.

Next, applying formula \cref{eq:morse.formula}, we conclude that $e \in E_M^{\rm I}$ if and only if
\[\iota^{M}(f,e)= \iota(\xi_1 f_1,e) + \iota(\xi_{l} f_l,e)=0.\]
Thus, we obtain the following condition
\begin{equation}
\iota(\xi_1 f_1,e)=-\iota(\xi_{l}f_l,e).
\label{eq:compatibility.condition.b}
\end{equation}

By \cref{lem:cocycle}, we know that the $1$-cochain $\cochain g_e^{\ha}$ satisfies $\inner{\cochain g_e^{\ha}}{e}=1$ and $\inner{\cochain g_e^{\ha}}{\partial_2(\xi_i f_i)} = 0$ for all $i \in \{1, \dots, l-1\}$.
Since $e'=e$ in \cref{alg:handles} at line 12, it follows that $\cochain g_e^{\ha}$ is a $1$-cocycle if and only if $\inner{\cochain g_e^{\ha}}{\partial_2(\xi_l f_l)} = 0$, which leads to the compatibility condition 
\begin{equation*}
1=\inner{\cochain g^{\ha}}{ e}=\inner{\cochain g^{\ha}}{e'}.
\end{equation*}
Using \cref{eq:transformation} from \cref{lem:cocycle}, we conclude that this condition is exactly \cref{eq:compatibility.condition.b}.
\smallskip\\
(ii) Let $(f_1^{(1)}, \dots, f_l^{(1)})$ and $(e_1^{(1)}, \dots, e_{l-1}^{(1)})$ be the sequences of $2$-cells and $1$-cells of $\dual p_{e,e^*}^{(1)}$, and let $(f_{1}^{(2)}, \dots, f_q^{(2)})$ and $(e_{1}^{(2)}, \dots, e_{q-1}^{(2)})$ be the corresponding sequences for $\dual p_{e,e^*}^{(2)}$.
See \cref{fig:dual} for reference.

Let $\dual p$ be the unique path in $\dual T$ from $D(f_1^{(2)})$ to $D(f_l^{(1)})$ and let $s_{\dual p}$ be the corresponding sequence in $M$.
As in Point (i), we can identify $f \coloneqq f_l^{(1)}$.

Apply Point (ii) of \cref{thm:main} to the pair $f$, $e$, and observe that $s_{\dual p}$ and $s_{\dual p_{e,e^*}^{(1)}}$ are the only two sequences in the set $S(f,e)$, since $f_1^{(1)},f_1^{(2)}$ are the unique $2$-cells of $\bar \K$ incident on $e$.
Then, applying formula \cref{eq:morse.formula} to the sequences $s_{\dual p_{e,e^*}^{(1)}}$ and $s_{\dual p}$, we have \[\iota^{M}(f,e)= \iota(\xi_1^{(1)} f_1^{(1)},e) + \iota(\xi_{1}^{(2)} f_{1}^{(2)},e).\]
Since $e \in E_M^{\rm{II}}$, by the definition of $E_M^{\rm{II}}$ in \cref{sec:differentiation}, we have $\iota^{M}(f,e) = 2 \eta_e$ with $\eta_e \in \{-1,1\}$.
Moreover, since both $2$-cells $f_{1}^{(1)}$, $f_{1}^{(2)}$ are also $2$-cells of $\K$, we deduce that $\iota(\xi_1^{(1)} f_1^{(1)},e), \iota(\xi_{1}^{(2)} f_{1}^{(2)},e) \in \{-1,1\}$.
Therefore, we conclude that \[\iota(\xi_1^{(1)} f_1^{(1)},e)=\iota(\xi_{1}^{(2)} f_{1}^{(2)},e)= \eta_e.\]

Similarly, let $\dual p$ be the unique path in $\dual T$ from $D(f_q^{(2)})$ to $D(f_l^{(1)})$.
Apply again Point (ii) in \cref{thm:main} to the pair $f$, $e^*$, and observe that $s_{\dual p}$ is the unique sequence of the set $S(f,e)$, since $f_l^{(1)}$ and $f_q^{(2)}$ are the only $2$-cells of $\bar \K$ incident on $e$.

Then, applying formula \cref{eq:morse.formula} to the sequence $s_{\dual p}$, we have \[\iota^{M}(f,e^*)=\iota(\xi_l^{(1)} f_{l}^{(1)},e^*) + \iota(\xi_{q}^{(2)} f_{q}^{(2)},e^*).\]
Since $e^* \in E_M^{\mathrm{II}}$, we conclude, using similar reasoning as above, that \[\iota(\xi_l^{(1)} f_{l}^{(1)},e^*) = \iota(\xi_{q}^{(2)} f_{q}^{(2)},e^*) = \eta_{e^*}\] with $\eta_{e^*} \in \{-1,1\}$.

By \eqref{eq:cocycle.face} in \cref{lem:cocycle}, we have that $\inner{\cochain g_e^{(1)}}{\partial_2(\xi_i^{(1)} f_i^{(1)})} = 0$ for all $i \in \{1, \dots, l-1\}$, and $\inner{\cochain g_e^{(2)}}{\partial_2(\xi_i^{(2)} f_i^{(2)})} = 0$ for all $i \in \{1, \dots, q-1\}$.
Recalling the definition of $\cochain g_e^{\ha}$ in \cref{alg:handles} at line 26, it follows that $\cochain g_e^{\ha}$ is a $1$-cocycle if and only if $\inner{\cochain g_e^{(1)}}{\partial_2(\xi_l^{(1)} f_l^{(1)})} = 0$ and $\inner{\cochain g_e^{(2)}}{\partial_2(\xi_q^{(2)} f_q^{(2)})} = 0$.
These conditions lead to the compatibility condition 
\begin{equation*}
\inner{\cochain g_e^{(1)}}{e^*} = \inner{\cochain g_e^{(2)}}{e^*},
\end{equation*}
along with $\inner{\cochain g_e^{(1)}}{e} = \inner{\cochain g_e^{(2)}}{e}$, which holds by the initialization in \cref{alg:cocycle} at line 2.

Applying formula \cref{eq:transformation} in \cref{lem:cocycle} to the sequences $s_{\dual p_{e,e^*}^{(1)}}$, $s_{\dual p_{e,e^*}^{(2)}}$, we deduce that
\begin{gather*}
\inner{\cochain g_e^{(1)}}{e^*}
=-\frac{\iota(\xi_{l}^{(1)} f_{l}^{(1)},e^*)}{\iota(\xi_1^{(1)} f_1^{(1)},e) }
=-\frac{\eta_{e^*}}{\eta_e}
=-\frac{\iota(\xi_{q}^{(2)} f_q^{(2)}, e^*)}
{\iota(\xi_{1}^{(2)} f_{1}^{(2)}, e)}
=\inner{\cochain g_e^{(2)}}{e^*}.
\end{gather*}

Thus, $\cochain g_e^{\ha}$ is a $1$-cocycle, and its restriction to $\bar \K_M$ satisfies the desired condition, as stated in \cref{eq:condition.b}.
\end{proof}

\begin{lemma}[First decomposition of global loops]
\label{lem:decomposition.a}
Let $\mc G^{\ha} = \{\cochain g_i^{\ha}\}_{i=1}^{\abs{\mc G^{\ha}}}$ be the set of $1$-cochains computed by \cref{alg:handles}, and let $\eqclassspan{\mc G^{\ha}} \coloneqq \{ \sum_{i=1}^{\abs{\mc G^{\ha}}} \gamma_i [\cochain g_i^{\ha}] \mid \gamma_i \in \R\}$ be the linear span of the equivalence classes associated with each generator $\cochain g^{\ha}_i \in \mc G^{\ha}$.
Then, the following isomorphism holds
\begin{equation*}
H^1(\bar \K) \cong \eqclassspan{\mc G^{\ha}}.
\label{eq:decomposition.a}
\end{equation*}
\end{lemma}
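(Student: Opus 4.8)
The plan is to deduce the isomorphism from \cref{cor:characterization} by exhibiting the elements of $\mc G^\ha$ as the cohomological dual basis, with respect to the pairing $\inner{\cdot}{\cdot}$, of a basis of the \emph{free part} of $H_1(\bar\K;\Z)$ furnished by \cref{lem:homology}. Since $\eqclassspan{\mc G^\ha}$ is by definition the linear span in $H^1(\bar\K)$ of the classes $[\cochain g_i^\ha]$, once these classes are shown to form a basis the stated isomorphism reduces to the equality $\eqclassspan{\mc G^\ha}=H^1(\bar\K)$; hence the entire task is to verify the biorthogonality hypothesis $\inner{\cochain g_i^\ha}{c_j}=\delta_{i,j}$ required by \cref{cor:characterization}.

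First I would exploit the support property recorded just before \cref{lem:equivalence}: every $\cochain g_e^\ha$ is built through \cref{alg:cocycle} from paths lying in the dual tree $\dual T$, whose edges are dual to edges outside the primal spanning tree $T$. Consequently the support of each $\cochain g_e^\ha$ contains no edge of $T$, and it meets $E_M$ only in $\{e\}$ when $\abs{E_M^{\rm II}}=0$, or in $\{e,e^*\}$ when $\abs{E_M^{\rm II}}>0$, because every intermediate path edge is dual to an edge of $\dual T$ and therefore lies outside $E_M$. Now for each $e'\in E_M$ the cycle $c_{e'}$ of \cref{lem:homology} equals $e'$ plus a combination of tree edges, oriented so that the coefficient of $e'$ is $+1$; since $\cochain g_e^\ha$ annihilates every tree edge, this gives at once $\inner{\cochain g_e^\ha}{c_{e'}}=\inner{\cochain g_e^\ha}{e'}$, reducing the homology pairing to the edge-wise values already computed in \cref{lem:equivalence}.

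Next I would match the two branches of \cref{alg:handles} to the two cases of \cref{lem:homology}. If $\abs{E_M^{\rm II}}=0$, the free homology generators are $\{c_e\}_{e\in E_M}$ and \cref{lem:equivalence}(i) yields, for each $e\in E_M$, a $1$-cocycle $\cochain g_e^\ha$ with $\inner{\cochain g_e^\ha}{e'}=\delta_{e,e'}$ for all $e'\in E_M$; combined with the previous reduction this gives $\inner{\cochain g_e^\ha}{c_{e'}}=\delta_{e,e'}$, so \cref{cor:characterization} applies with $\beta_1=\abs{E_M}=g$. If $\abs{E_M^{\rm II}}>0$, the cohomologically relevant generators are the $g-1$ free classes $\{c_e\}_{e\in E_M\setminus\{e^*\}}$, the torsion class $\torsion c$ being automatically killed by any cocycle $\cochain g$ (as $2[\torsion c]=0$ forces $2\inner{\cochain g}{\torsion c}=0$, whence $\inner{\cochain g}{\torsion c}=0$). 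For $e\in E_M\setminus\{e^*\}$ I would establish $\inner{\cochain g_e^\ha}{e'}=\delta_{e,e'}$ on $E_M\setminus\{e^*\}$ by combining \cref{lem:equivalence}(i) for $e\in E_M^{\rm I}$, \cref{lem:equivalence}(ii) for the pairings of $e\in E_M^{\rm II}\setminus\{e^*\}$ against $e'\in E_M^{\rm II}\setminus\{e^*\}$, and the support property to annihilate the remaining cross-terms with $e'\in E_M^{\rm I}$ (which vanish because the support meets $E_M$ only in $\{e,e^*\}$). Feeding the resulting $\inner{\cochain g_e^\ha}{c_{e'}}=\delta_{e,e'}$ into \cref{cor:characterization} with $\beta_1=g-1$ again exhibits $\mc G^\ha$ as a basis of $H^1(\bar\K)$.

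The main obstacle is the torsion case $\abs{E_M^{\rm II}}>0$: one must pair $\mc G^\ha$ against the free part of $H_1(\bar\K;\Z)$ alone, confirm that $\mc G^\ha$ has exactly $\beta_1=g-1$ elements (so that biorthogonality forces a basis rather than merely an independent or spanning family), and dispose of the mixed $E_M^{\rm I}$--$E_M^{\rm II}$ pairings that \cref{lem:equivalence}(ii) does not address directly --- a gap I would close by the support-in-non-tree-edges argument above. Once biorthogonality is in place, \cref{cor:characterization} closes both cases and gives $\eqclassspan{\mc G^\ha}=H^1(\bar\K)$, which is the asserted isomorphism.
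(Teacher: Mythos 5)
Your proposal is correct and follows essentially the same route as the paper's proof: reduce the pairing $\inner{\cochain g_e^\ha}{c_{e'}}$ to the edge-wise value $\inner{\cochain g_e^\ha}{e'}$ via the no-tree-edges support property, split into the cases $\abs{E_M^{\rm II}}=0$ and $\abs{E_M^{\rm II}}>0$ using \cref{lem:equivalence}, and conclude with \cref{cor:characterization} and \cref{lem:homology}. Your explicit treatment of the mixed $E_M^{\rm I}$--$E_M^{\rm II}$ pairings (via the observation that the support meets $E_M$ only in $\{e,e^*\}$) and of the vanishing of cocycle pairings against the torsion class spells out details the paper leaves implicit, but it is the same argument.
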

\begin{proof}
For each $e \in E_M$, let $\cochain g_e^{\ha}$ be the $1$-cocycle of $\bar \K$ computed in \cref{alg:handles} at line 12.
Applying Point (i) in \cref{lem:equivalence}, it follows that, if $\cochain g_e^{\ha} \neq \cochain 0$, then $\cochain g_e^{\ha}$ is such that its restriction to $\bar \K_M$ satisfies \cref{eq:condition.a}, and $e \in E_M^{\rm I}$.
On the other hand, if $\cochain g_e^{\ha} = \cochain 0$, then $e \in E_M^{\rm{II}}$.
This defines the partition $E_M = E_M^{\mathrm {I}} \sqcup E_M^{\mathrm {II}}$, and we now analyze the two possible cases for this partition.

In the case where $\abs{E_M^{\mathrm{II}}} = 0$, observe that, by construction, the support of $\cochain g_e^{\ha}$ contains no edge of $T$.
Consequently, for each $e' \in E_M$, letting $c_{e'}$ denote the corresponding $1$-cycle in $\bar{\K}$ (as defined in the statement of \cref{lem:homology}), we have  
\[
\inner{\cochain g_e^{\ha}}{c_{e'}} = \inner{\cochain g_e^{\ha}}{e'} = \delta_{e,e'},
\]
by virtue of \cref{eq:condition.a}.
Then, applying \cref{cor:characterization} together with \cref{lem:homology}, we conclude that $\mc G^{\ha} = \{\cochain g_e^{\ha}\}_{e \in E_M}$ constitutes a set of generators of $H^1(\bar{\K})$.

In the case where $\abs{E_M^{\mathrm {II}}} > 0$, let $e^* \in E_M^{\mathrm {II}}$ denote the $1$-cell fixed in \cref{alg:handles} at line 18. 
For each $e \in E_M^{\mathrm {II}} \setminus \{e^*\}$, let $\cochain g_e^{\ha}$ be the corresponding $1$-cocycle of $\K$ computed in \cref{alg:cocycle} at line 26.
By Point (ii) of \cref{lem:equivalence}, each $\cochain g_e^{\ha}$ for $e \in E_M^{\rm II} \setminus \{e^*\}$ is $1$-cocycle of $\bar \K$ whose restriction to $\bar \K_M$ satisfies \cref{eq:condition.b}.
Since the support of $\cochain g_e^{\ha}$ does not contain any edge of $T$, it follows that, for all $e' \in E_M \setminus\{e^*\}$, we have
\begin{equation*}
\inner{\cochain g_e^{\ha}}{c_{e'}} = \inner{\cochain g_e^{\ha}}{e'}=\delta_{e,e'},
\end{equation*}
thanks to \cref{eq:condition.b}, as well as $\inner{\cochain g_e^{\ha}}{\torsion c} = \inner{\cochain g_e^{\ha}}{ c^*}$.

Using \cref{cor:characterization} and \cref{lem:homology}, we conclude that $\mc G^{\ha} = \{\cochain g_e^{\ha}\}_{e \in E_M \setminus \{e^*\}}$ is a set of generators of $H^1(\bar \K)$.

Thus, in both cases, the set $\mc G^{\ha}$ provides a valid set of generators of $H^1(\bar \K)$, determined by analyzing the contributions of $1$-cocycles based on the partition $E_M = E_M^{\mathrm {I}} \sqcup E_M^{\mathrm {II}}$.
\end{proof}

\subsection{A second decomposition and proof of correctness of \cref{alg:holes}}
\label{sec:second.decomposition}

We now focus on $H^1(\K, \partial\K)$ and present a preparatory result for the second decomposition of global loops, which is associated with the connected components of the boundary $\partial \K$.
\begin{lemma}[Dimension calculation via long exact sequences]
\label{lem:dimension.a}
Let $\K$ a $2$-dimensional CW complex with a non-empty boundary $\partial \K$.
Then, the relationship between the dimensions of $H^1(\K, \partial \K)$ and $H^1(\bar \K)$ is given by the following expression
\begin{equation}
\rank{H^1(\K, \partial \K)} - \rank{H^1(\bar \K)} = N^{\ho} -1.
\label{eq:dimension.a}
\end{equation}
\end{lemma}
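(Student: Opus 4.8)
The plan is to reduce the whole computation to the long exact sequence of a single CW pair. Recall from \cref{sec:reduction} that $\bar\K$ is obtained from $\K$ by collapsing each boundary component $\partial\K_k$ to a single $0$-cell $n_k$, for $k\in\{1,\dots,N^{\ho}\}$. Set $A\coloneqq\{n_1,\dots,n_{N^{\ho}}\}$, a $0$-dimensional subcomplex of $\bar\K$. The key observation is that the natural collapse map $q:\K\to\bar\K$ sends $\partial\K$ onto $A$ and restricts to a homeomorphism $\K\setminus\partial\K\to\bar\K\setminus A$; equivalently, the two iterated quotients coincide, $\K/\partial\K\cong\bar\K/A$ (collapsing each component to its own point and then identifying those points is the same as collapsing all of $\partial\K$ at once). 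First I would use this to transport the relative group $H^1(\K,\partial\K)$ onto $H^1(\bar\K,A)$.

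Concretely, since $(\K,\partial\K)$ and $(\bar\K,A)$ are CW pairs, hence good pairs, the quotient isomorphism for good pairs gives $H^1(\K,\partial\K)\cong\tilde H^1(\K/\partial\K)$ and $H^1(\bar\K,A)\cong\tilde H^1(\bar\K/A)$. Combined with the identification $\K/\partial\K\cong\bar\K/A$ this yields $H^1(\K,\partial\K)\cong H^1(\bar\K,A)$, so it suffices to compare $H^1(\bar\K,A)$ with $H^1(\bar\K)$.

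That comparison is then immediate from the long exact sequence of the pair $(\bar\K,A)$ in reduced cohomology. Because $\K$ is connected, so is $\bar\K$, giving $\tilde H^0(\bar\K)=0$; because $A$ consists of $N^{\ho}$ points, we have $\tilde H^0(A)\cong\R^{N^{\ho}-1}$ and $\tilde H^1(A)=0$. The relevant portion of the sequence therefore collapses to the short exact sequence
\begin{equation*}
0\longrightarrow \R^{N^{\ho}-1}\longrightarrow H^1(\bar\K,A)\longrightarrow H^1(\bar\K)\longrightarrow 0,
\end{equation*}
and taking dimensions gives $\rank H^1(\bar\K,A)=(N^{\ho}-1)+\rank H^1(\bar\K)$. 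Substituting $H^1(\bar\K,A)\cong H^1(\K,\partial\K)$ yields exactly \cref{eq:dimension.a}.

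The only genuine obstacle is the topological identification $\K/\partial\K\cong\bar\K/A$, together with checking that both pairs meet the good-pair hypotheses required for the quotient isomorphism; once this is in place the exact sequence does all the work, and the contribution $N^{\ho}-1$ arises entirely from the reduced $0$-th cohomology of the $N^{\ho}$ collapse points. A minor but necessary piece of bookkeeping is to track reduced versus unreduced cohomology so that the absolute term appears as $H^1(\bar\K)$, exactly as in the statement.
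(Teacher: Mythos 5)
Your argument is correct, and it takes a genuinely different route from the paper. The paper proves \cref{eq:dimension.a} by combining two long exact sequences --- the Mayer--Vietoris sequence for the triad $(\bar\K,\K,D)$ with $D=\bigcup_k D_k$, and the long exact sequence of the pair $(\K,\partial\K)$ --- and then applying the Euler characteristic formula to both; this forces it to tabulate the dimensions of several elementary groups and, in particular, to observe that $\rank H^2(\K,\partial\K)=\rank H^2(\bar\K)$ whether or not $\K$ is orientable. You instead exploit the fact that $\bar\K$ is precisely $\K$ with each boundary component collapsed to its own point, so that $\K/\partial\K\cong\bar\K/A$ with $A=\{n_1,\dots,n_{N^{\ho}}\}$, transport $H^1(\K,\partial\K)$ to $H^1(\bar\K,A)$ via the good-pair (CW-pair) quotient isomorphism, and read off the answer from the single reduced long exact sequence of $(\bar\K,A)$, which truncates at $\tilde H^1(A)=0$. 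This buys you a shorter computation that never touches $H^2$ and needs no orientability case distinction; the price is the topological identification $\K/\partial\K\cong\bar\K/A$ and the cohomological version of the quotient isomorphism, both of which are standard for CW pairs (note that $(\bar\K,A)$ is indeed a CW pair since the $n_k$ are $0$-cells of $\bar\K$, and connectedness of $\bar\K$, needed for $\tilde H^0(\bar\K)=0$, follows from the standing assumption that $\K$ is connected). Both arguments are sound; yours is arguably the more economical, while the paper's sets up the Mayer--Vietoris machinery that it reuses in \cref{lem:dimension.b}.
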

\begin{proof}
The proof consists in expressing the dimensions of $H^1(\K, \partial \K)$ and $H^1(\bar \K)$ in terms of that $H^1(\K)$ and in computing the dimension of some elementary cohomology spaces.

We start by considering the Mayer-Vietoris long exact sequence \cite[Theorem 3, Chapter 11]{spivak1979comprehensive} for the triad $(\bar \K$, $\K$, $D)$, where $D$ is defined as $D \coloneqq \bigcup_{k=1}^{N^{\ho}} D_k$.
The exact sequence is as follows:
\begin{align}
\begin{split}
0 &\longrightarrow \underset{1}{\underbrace{H^0(\bar \K)}} \xrightarrow{} \underset{1}{\underbrace{H^0(\K)}}\oplus \underset{N^{\ho}}{\underbrace{H^0(D)}} \xrightarrow{} \underset{N^{\ho}}{\underbrace{H^0(\K \cap D)}} \rightarrow\\
&\xrightarrow{} H^1(\bar \K) \xrightarrow{} H^1(\K) \oplus \underset{0}{\underbrace{H^1(D)}} \xrightarrow{} \underset{N^{\ho}}{\underbrace{H^1(K \cap D)}} \rightarrow\\
&\xrightarrow{} H^2(\bar \K) \xrightarrow{} \underset{0}{\underbrace{H^2(\K)}} \oplus \underset{0}{\underbrace{H^2(D)}} \xrightarrow{} \underset{0}{\underbrace{H^2(\K \cap D)}} \longrightarrow 0,
\label{eq:sequence.a.1}
\end{split}
\end{align}
where the dimensions of the corresponding cohomology groups are indicated under the braces.

Next, we apply the long exact sequence in relative cohomology \cite[Theorem 13]{spivak1979comprehensive} for the pair $(\K, \partial \K)$:
\begin{align}
\begin{split}
0 &\longrightarrow \underset{0}{\underbrace{H^0(\K, \partial \K)}} \xrightarrow{} \underset{1}{\underbrace{H^0(\K)}} \xrightarrow{} \underset{N^{\ho}}{\underbrace{H^0(\partial \K)}} \rightarrow\\
&\xrightarrow{} H^1( \K, \partial \K) \xrightarrow{} H^1(\K) \xrightarrow{} \underset{N^{\ho}}{\underbrace{H^1(\partial \K)}} \rightarrow\\
&\xrightarrow{} H^2(\K, \partial \K) \xrightarrow{} \underset{0}{\underbrace{H^2(\K)}} \xrightarrow{} \underset{0}{\underbrace{H^2(\partial \K)}} \longrightarrow 0,
\label{eq:sequence.a.2}
\end{split}
\end{align}
where the dimensions of the cohomology groups are indicated under the braces.
In particular, $N^{\ho}$ denotes the number of connected components of $\partial \K$.

Finally, applying the Euler characteristic formula \cite[Theorem 2.44]{hatcher} to the exact sequences \cref{eq:sequence.a.1} and \cref{eq:sequence.a.2}, and noting that in \cref{eq:sequence.a.2}, the dimension of $H^2(\K, \partial \K)$ equals that of $H^2(\bar \K)$ in \cref{eq:sequence.a.1}, regardless of whether $\K$ is orientable, we obtain the desired formula \cref{eq:dimension.a}.
\end{proof}

We are now ready to prove the second decomposition of global loops, which demonstrates the correctness of \cref{alg:holes}.
\begin{lemma}[Second decomposition of global loops]
\label{lem:decomposition.b}
Let $\mc G^{\ho} = \{\cochain g^{\ho}_k\}_{k=1}^{\abs{\mc G^{\ho}}}$ be the set of $1$-cochains computed by \cref{alg:holes}, and define $\eqclassspan{\mc G^{\ho}} \coloneqq \{\sum_{k=1}^{\abs{\mc G^{\ho}}} \beta_k [\cochain g^{\ho}_k] \mid \beta_k \in \R\}$ as the linear span of the equivalence classes associated with each generator $\cochain g^{\ho}_k \in \mc G^{\ho}$.
Then, the following group isomorphism holds
\begin{align}
H^1(\K, \partial \K) \cong  H^1(\bar \K) \oplus \eqclassspan{\mc G^{\ho}}.
\label{eq:decomposition.b}
\end{align}
\end{lemma}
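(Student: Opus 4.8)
The plan is to compare the relative cochain complex $C^\bullet(\K,\partial\K)$ with the cochain complex of $\bar\K$ and to push everything through the comparison map they induce. The crux of the argument---and the step I expect to require the most care---is setting up this dictionary correctly. Since $\partial\K$ contains no $2$-cells, the relative groups $C^1(\K,\partial\K)$ and $C^2(\K,\partial\K)$ are precisely the cochains supported on interior edges and on all faces, so they coincide with $C^1(\bar\K)$ and $C^2(\bar\K)$; moreover the coboundary $\delta^1$ acts identically in the two complexes, because the passage from $\K$ to $\bar\K$ (\cref{sec:reduction}) deletes exactly the boundary edges, which every relative cochain already annihilates. In degree $0$ the complexes differ: $C^0(\bar\K)$ carries the $N^{\ho}$ extra generators recording the values at the cone points $n_1,\dots,n_{N^{\ho}}$, while $C^0(\K,\partial\K)$ consists of the $0$-cochains vanishing on $\partial\K$. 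Extending a relative $0$-cochain by the value $0$ at each $n_k$ thus defines an inclusion of cochain complexes, which I would verify commutes with the differentials; it induces a map $\rho\colon H^1(\K,\partial\K)\to H^1(\bar\K)$ that in degree one merely reinterprets a boundary-vanishing relative cocycle as a cocycle of $\bar\K$.

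With $\rho$ available, I would evaluate it on the two families. The cochains $\cochain g_i^{\ha}$ have support containing no boundary edge (as recorded before \cref{lem:equivalence}), hence are relative cocycles, and $\rho[\cochain g_i^{\ha}]$ is the class of $\cochain g_i^{\ha}$ in $H^1(\bar\K)$; by \cref{lem:decomposition.a} these classes form a basis of $H^1(\bar\K)$. Each hole generator $\cochain g_k^{\ho}=\delta^0\cochain c_k^{\ho}$ is a coboundary, hence a cocycle, and it vanishes on every boundary edge, since both endpoints of such an edge lie in one component $\partial\K_j$, where $\cochain c_k^{\ho}$ is constant and the coboundary cancels; thus $\cochain g_k^{\ho}$ is a relative cocycle. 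The key observation is that, viewed on $\bar\K$, it is the coboundary of the $0$-cochain equal to $1$ at the cone point $n_k$ and $0$ at every other vertex of $\bar\K$, so that $\rho[\cochain g_k^{\ho}]=0$ and $\eqclassspan{\mc G^{\ho}}\subseteq\ker\rho$.

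Finally I would close the argument by a dimension count. Together the two families contribute $\rank H^1(\bar\K)+(N^{\ho}-1)$ classes, which by \cref{lem:dimension.a} equals $\rank H^1(\K,\partial\K)$, so it suffices to show they are linearly independent in $H^1(\K,\partial\K)$. From a vanishing relation $\sum_i\gamma_i[\cochain g_i^{\ha}]+\sum_k\beta_k[\cochain g_k^{\ho}]=0$, applying $\rho$ annihilates the $\ho$-terms and leaves $\sum_i\gamma_i[\cochain g_i^{\ha}]=0$ in $H^1(\bar\K)$, forcing every $\gamma_i=0$. The surviving identity $\sum_k\beta_k\cochain g_k^{\ho}=\delta^0\cochain c$ for some relative $0$-cochain $\cochain c$ then makes $\sum_k\beta_k\cochain c_k^{\ho}-\cochain c$ a global $0$-cocycle, hence constant on the connected complex $\K$; evaluating this constant on $\partial\K_{N^{\ho}}$ yields $0$ and on each $\partial\K_j$ with $j<N^{\ho}$ yields $\beta_j$, so all $\beta_j=0$. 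The combined family is therefore a basis of $H^1(\K,\partial\K)$, giving the internal direct sum $H^1(\K,\partial\K)=\eqclassspan{\mc G^{\ha}}\oplus\eqclassspan{\mc G^{\ho}}$; identifying $\eqclassspan{\mc G^{\ha}}\cong H^1(\bar\K)$ through \cref{lem:decomposition.a} delivers \cref{eq:decomposition.b}.
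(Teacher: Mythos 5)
Your argument is correct and follows essentially the same route as the paper: the hole generators are relative cocycles that become coboundaries on $\bar\K$, the handle generators account for $H^1(\bar\K)$ via \cref{lem:decomposition.a}, and the count closes with \cref{lem:dimension.a}. The only difference is that you make explicit two points the paper leaves implicit---the comparison map $\rho\colon H^1(\K,\partial\K)\to H^1(\bar\K)$ that gives precise meaning to ``$\eqclassspan{\mc G^{\ho}}\cap H^1(\bar\K)=\{0\}$,'' and the evaluation argument showing the classes $[\cochain g_k^{\ho}]$ are independent modulo coboundaries---which strengthens rather than changes the proof.
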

\begin{proof}
We begin by recalling the definition of $\cochain g_k^{\ho}$ in \cref{alg:holes}, specifically at lines 2--3.
Each $\cochain g_k^{\ho}\in \mc G^{\ho}$ is a $1$-cocycle; in particular $\cochain g_k^{\ho}\notin B^1(\K,\partial\K)$. Hence
$\eqclassspan{\mc G^{\ho}}\subset H^1(\K,\partial\K)$.
Moreover, since each $\cochain g_k^{\ho}\in B^1(\bar\K)$, the classes represented by $\mc G^{\ho}$ are trivial in $H^1(\bar\K)$, so $\eqclassspan{\mc G^{\ho}}\cap H^1(\bar\K)=\{0\}$.

Since each $\cochain g^{\ho}_k$ is associated with the $k$-th connected component $\partial\K_k$ of $\partial\K$, the elements of $\mc G^{\ho}$ are linearly independent.
As $\lvert \mc G^{\ho}\rvert = N^{\ho}-1$, applying \cref{lem:dimension.a} yields
\begin{equation*}
\rank{\eqclassspan{\mc G^{\ho}}}
= N^{\ho}-1
= \rank H^1(\K,\partial\K) - \rank H^1(\bar\K),
\end{equation*}
which establishes the decomposition stated in \cref{eq:decomposition.b}.
\end{proof}

\subsection{A third decomposition and proof of correctness of \cref{alg:contacts}}
We now discuss a preparatory result similarly to the one in \cref{lem:dimension.a}.
\begin{lemma}[Dimension computation via long exact sequences]
\label{lem:dimension.b}
Let $\K$ a $2$-dimensional CW complex with a non-empty boundary $\partial \K$.
Recalling that $\partial \K^{c} \coloneqq \partial \K \setminus \bigcup_{j=1}^{N^{\co}} \partial \K^{\co}_j$, where each $\partial \K^{\co}_j$ is a connected component of $\partial \K^{\co}$, the following expression relates the dimensions of $H^1(\K, \partial \K^{c})$ and $H^1(\K, \partial \K)$:
\begin{equation}
\rank{H^1(\K, \partial \K^{c})} - \rank{H^1(\K, \partial \K)} =
\begin{dcases} 
N^{\co} - 1 &\text{if $\K$ is orientable}, \\
N^{\co} &\text{otherwise}.
\end{dcases}
\label{eq:dimension.b}
\end{equation}
\end{lemma}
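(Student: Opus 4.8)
The plan is to relate the two relative groups through a single exact sequence containing them both, namely the long exact cohomology sequence of the triple $(\K,\partial\K,\partial\K^c)$ with real coefficients, in the same spirit as \cref{lem:dimension.a}. Writing $\partial\K^{\co}=\partial\K\setminus\partial\K^c$ for the union of the contacts, the relevant portion is
\begin{align*}
0 &\to H^0(\K,\partial\K) \to H^0(\K,\partial\K^c) \to H^0(\partial\K,\partial\K^c) \\
&\to H^1(\K,\partial\K) \to H^1(\K,\partial\K^c) \to H^1(\partial\K,\partial\K^c) \\
&\to H^2(\K,\partial\K) \to H^2(\K,\partial\K^c) \to 0,
\end{align*}
where the rightmost term is $H^2(\partial\K,\partial\K^c)=0$ because $\partial\K$ is $1$-dimensional. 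Since the alternating sum of dimensions along an exact sequence vanishes, \cref{eq:dimension.b} will follow once every term except the two middle ones is identified.

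Second, I would dispose of the easy terms. Because $\K$ is connected and both $\partial\K$ and $\partial\K^c$ are nonempty, $H^0(\K,\partial\K)=H^0(\K,\partial\K^c)=0$. For the boundary pair I would collapse $\partial\K^c$: each contact $\partial\K^{\co}_j$ is an arc meeting $\partial\K^c$ exactly in its two endpoints (the physical picture of a port), so in the quotient $\partial\K/\partial\K^c$ each contact becomes a loop based at the collapsed point, while every no-contact boundary circle disappears. Hence $\partial\K/\partial\K^c$ has the reduced cohomology of a wedge of $N^{\co}$ circles, giving $\rank H^0(\partial\K,\partial\K^c)=0$ and $\rank H^1(\partial\K,\partial\K^c)=N^{\co}$. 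It is exactly here that the hypothesis that no boundary component is entirely a contact is used; otherwise a nonzero $H^0(\partial\K,\partial\K^c)$ would appear and shift the count.

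Third, I would compute the two top-dimensional groups, which is where the orientable/non-orientable dichotomy lives. As already noted in the proof of \cref{lem:dimension.a}, $\rank H^2(\K,\partial\K)=\rank H^2(\bar\K)$, and since $\bar\K$ is a closed connected surface this equals $1$ when $\K$ is orientable and $0$ otherwise. For the last unknown, $H^2(\K,\partial\K^c)$, I would argue that it vanishes in both cases: when $\K$ is non-orientable this is immediate from the sequence, since the preceding term $H^2(\K,\partial\K)$ already vanishes; when $\K$ is orientable, Poincar\'e--Lefschetz duality gives $H^2(\K,\partial\K^c)\cong H_0(\K,\partial\K^{\co})$, which is $0$ because $\K$ is connected and $\partial\K^{\co}\neq\emptyset$. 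Substituting $\rank H^0=0$, $\rank H^1(\partial\K,\partial\K^c)=N^{\co}$, $\rank H^2(\K,\partial\K^c)=0$, and the value of $\rank H^2(\K,\partial\K)$ into the vanishing alternating sum yields
\[
\rank H^1(\K,\partial\K^c)-\rank H^1(\K,\partial\K)=N^{\co}-\rank H^2(\K,\partial\K),
\]
which is precisely \cref{eq:dimension.b}.

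The main obstacle is the top-dimensional bookkeeping: the entire dichotomy is carried by $\rank H^2(\K,\partial\K)$, and one must certify $H^2(\K,\partial\K^c)=0$ without duality in the non-orientable case---the triple sequence does this cleanly precisely because the capped-surface computation already forces $H^2(\K,\partial\K)=0$ there. A secondary but essential point is the computation of $H^\ast(\partial\K,\partial\K^c)$: one must check that each contact meets $\partial\K^c$ in exactly its endpoints, so that the contribution is exactly $N^{\co}$ and $H^0(\partial\K,\partial\K^c)$ vanishes.
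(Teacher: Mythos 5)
Your argument is correct, but it follows a genuinely different route from the paper's. The paper works with two separate long exact sequences of \emph{pairs} --- \cref{eq:sequence.b.1} for $(\K,\partial\K^{c})$ and \cref{eq:sequence.a.2} for $(\K,\partial\K)$ --- and compares them through the common term $H^1(\K)$ via the Euler characteristic formula; the combinatorial content is then concentrated in the identity $\rank H^1(\partial\K^{c}) = N^{c}-N^{\co}$ of \cref{eq:passage}, proved by a component-counting argument over the boundary circles. You instead use the single long exact sequence of the \emph{triple} $(\K,\partial\K,\partial\K^{c})$, which places both groups of interest in one sequence and replaces the counting lemma by the computation $\rank H^1(\partial\K,\partial\K^{c})=N^{\co}$, $H^0(\partial\K,\partial\K^{c})=0$, obtained by collapsing $\partial\K^{c}$ to a wedge of $N^{\co}$ circles. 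Both proofs carry the orientability dichotomy through $\rank H^2(\K,\partial\K)\in\{0,1\}$, and both tacitly need $H^2(\K,\partial\K^{c})=0$ (the paper simply labels it $0$ under the brace); your squeeze between $H^2(\K,\partial\K)$ and $H^2(\partial\K,\partial\K^{c})$ handles the non-orientable case without duality, and Lefschetz duality with split boundary handles the orientable one, which is arguably more explicit than the paper on this point. Your approach also makes visible the implicit standing hypothesis --- each contact is a proper arc of a boundary circle, so no boundary component is entirely contained in $\partial\K^{\co}$ --- which the paper uses silently in the identity $\lvert J^{\co}_k\rvert=N^{c}_k$. The trade-off is essentially aesthetic: the triple sequence is shorter and more self-contained, while the paper's two-sequence bookkeeping reuses \cref{eq:sequence.a.2} already established for \cref{lem:dimension.a}.
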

\begin{proof}
We proceed in a similar manner as in the proof of \cref{lem:dimension.a}, by expressing the dimensions of $H^1(\K, \partial \K^{c})$ and $H^1(\K, \partial \K)$ in terms of $H^1(\K)$.

First, let us consider the long exact sequence in relative cohomology \cite[Theorem 12]{spivak1979comprehensive} for the pair $(\K, \partial \K^{c})$:
\begin{align}
\begin{split}
0 &\longrightarrow \underset{0}{\underbrace{H^0(\K, \partial \K^{c})}} \xrightarrow{} \underset{1}{\underbrace{H^0(\K)}} \xrightarrow{} \underset{N^{c}}{\underbrace{H^0(\partial \K^{c})}} \rightarrow\\
&\xrightarrow{} H^1( \K, \partial \K^{c}) \xrightarrow{} H^1(\K) \xrightarrow{} H^1(\partial \K^{c}) \rightarrow\\
&\xrightarrow{} \underset{0}{\underbrace{H^2(\K, \partial \K^{c})}} \xrightarrow{} \underset{0}{\underbrace{H^2(\K)}} \xrightarrow{} \underset{0}{\underbrace{H^2(\partial \K^{c})}} \longrightarrow 0,
\label{eq:sequence.b.1}
\end{split}
\end{align}
where $N^{c}$ represents the number of connected components of $\partial \K^{c}$.

Next, let us note the following two facts:
\begin{enumerate}
\item In the exact sequence for $(\K,\partial \K)$ given in \cref{eq:sequence.a.2} of \cref{lem:dimension.a}, the dimension of $H^2(\K, \partial \K)$ is $1$ if $\K$ is orientable and $0$ if $\K$ is non-orientable.
\item In the exact sequence for $(\K, \partial \K^{c})$ in \cref{eq:sequence.b.1}, the dimension of $H^1(\partial \K^{c})$ satisfies the formula
\begin{equation}
\rank{H^1(\partial \K^{c})} = N^{c} - N^{\co},
\label{eq:passage}
\end{equation}
\end{enumerate}
which will now we prove.

Consider the set of all connected components $\partial \K_k$ of $\partial \K$, where $k \in \{1, \dots, N^{\ho}\}$.
After potentially rearranging the indices, we assume that there is a number $p \in \{1, \dots, N^{\ho}\}$ such that the first $p$ connected components, $\{\partial \K_1, \dots, \partial \K_p \}$, contain some contact regions $\partial \K^{\co}_j$ for $j \in \{1, \dots, N^{\co}\}$, while the remaining components $\{\partial \K_{p+1}, \dots, \partial \K_{N^{\ho}}\}$ do not.
If $p=N^{\ho}$, then the second set is empty.

For each $k \in \{1, \dots, N^{\ho}\}$, define $J^{\co}_k$ as the set of indices $j \in \{1, \dots, N^{\co}\}$ such that $\partial \K^{\co}_j \subset \partial \K_k$.
Let $|J^{\co}_k|$ denote the cardinality of $J^{\co}_k$, and define $N^{c}_k$ as the number of connected components of the set $\partial \K_k \setminus \bigcup_{j \in J^{\co}_k} \partial \K^{\co}_j$.

It is easy to check that $|J^{\co}_k| = N^{c}_k$ for all $k \in \{1, \dots, p\}$, and that $|J^{\co}_k|
= 0$ and $N^{c}_k =1$ for all $k \in \{p+1, \dots, N^{\ho}\}$.
Hence, we have the relation \[N^{\ho} - p = \sum_{k=p+1}^{N^{\ho}} N^{c}_k.\]

Also, since $\rank{H^1(\partial \K^{c})} = N^{\ho} -p$, we obtain the formula:
\begin{equation*}
N^{c}
=\sum_{k=1}^{N^{\ho}} N^{c}_k
=\sum_{k=1}^{p} N^{c}_k + \sum_{k=p+1}^{N^{\ho}} N^{c}_k 
= N^{\co} + \rank{H^1(\partial \K^{c})},
\end{equation*}
which is the desired result \cref{eq:passage}.

Finally, applying the Euler characteristic formula \cite[Theorem 2.44]{hatcher} to the exact sequences \cref{eq:sequence.b.1} and \cref{eq:sequence.a.2} in \cref{lem:dimension.a}, and using Facts 1 and 2, we derive the formula \cref{eq:dimension.b}.
\end{proof}

We are now ready to state the third decomposition of global loops, which also demonstrates the correctness of \cref{alg:contacts}.
\begin{lemma}[Third decomposition of global loops]
\label{lem:decomposition.c}
Let $\mc G^{\co}=\{\cochain g^{\co}_j\}_{j=1}^{\abs{\mc G^{\co}}}$ be the set of $1$-cochains computed by \cref{alg:contacts}, and define $\eqclassspan{\mc G^{\co}} \coloneqq \{\sum_{j=1}^{\abs{\mc G^{\co}}} \alpha_j [\cochain g^{\co}_j] \mid \alpha_j \in \R\}$ as the linear span of the equivalence classes associated with each generator $\cochain g^{\co}_j \in \mc G^{\co}$.
Then, the following group isomorphism holds
\begin{align}
H^1(\K, \partial \K^{c}) \cong H^1(\K, \partial \K) \oplus \eqclassspan{\mc G^{\co}}.
\label{eq:decomposition.c}
\end{align}
\end{lemma}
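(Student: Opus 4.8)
The plan is to follow the template of the second decomposition (\cref{lem:decomposition.b}) and realise $\eqclassspan{\mc G^{\co}}$ as a complement, inside $H^1(\K,\partial\K^c)$, to the image of the natural map $j^\ast\colon H^1(\K,\partial\K)\to H^1(\K,\partial\K^c)$ induced by the inclusion $C^\bullet(\K,\partial\K)\hookrightarrow C^\bullet(\K,\partial\K^c)$ (a cochain vanishing on all of $\partial\K$ a fortiori vanishes on $\partial\K^c$). The organising tool is the long exact sequence of the triple $(\K,\partial\K,\partial\K^c)$,
\[
H^1(\K,\partial\K)\xrightarrow{\,j^\ast\,}H^1(\K,\partial\K^c)\xrightarrow{\,r\,}H^1(\partial\K,\partial\K^c)\xrightarrow{\,\delta\,}H^2(\K,\partial\K),
\]
in which exactness gives $\ker r=\mathrm{im}\,j^\ast$, and whose relevant dimensions are already available from the computations in \cref{lem:dimension.b}.

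First I would check that each $\cochain g_j^{\co}\in\mc G^{\co}$ is a relative $1$-cocycle of $(\K,\partial\K^c)$. For $j\in\{1,\dots,N^{\co}-1\}$ the cochain comes from \cref{alg:cocycle} applied to the dual path $\dual p_{e_j,e}$, so by \cref{lem:cocycle} its support consists of the interior path edges together with the two contact edges $e_j\in\partial\K^{\co}_j$ and $e\in\partial\K^{\co}_{N^{\co}}$; since $e_j,e\in\partial\K^{\co}=\partial\K\setminus\partial\K^c$, the support avoids $\partial\K^c$ and hence $\cochain g_j^{\co}\in C^1(\K,\partial\K^c)$. Equation \cref{eq:cocycle.face} gives the cocycle condition at every interior face of the path, and the terminal value fixed by \cref{eq:transformation} secures it at the last face, so $\delta^1\cochain g_j^{\co}=0$ in $C^2(\K,\partial\K^c)=C^2(\K)$. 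The extra generator $\cochain g_{N^{\co}}^{\co}$, present exactly when $\abs{E_M^{\rm II}}>0$, is built from $\dual p_{e,e^*}^{(1)},\dual p_{e,e^*}^{(2)}$ by the same recipe as in Point (ii) of \cref{lem:equivalence}, and the identical computation shows it is a relative cocycle as well.

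Next I would analyse the restriction $r(\mc G^{\co})$. For $j<N^{\co}$ the class $r(\cochain g_j^{\co})$ is supported on the boundary at the contact edges $e_j$ and $e$, and is precisely the boundary cocycle distinguishing the contact component $\partial\K^{\co}_j$ from the fixed component $\partial\K^{\co}_{N^{\co}}$; these $N^{\co}-1$ classes are linearly independent in $H^1(\partial\K,\partial\K^c)$. The decisive case is the torsion branch: by \cref{lem:homology} the condition $\abs{E_M^{\rm II}}>0$ is equivalent to $\K$ being non-orientable, whence $\dim H^2(\K,\partial\K)=0$, and since $H^2(\K,\partial\K^c)=0$ the map $\delta$ has image $0$, so $r$ is onto with $\rank\mathrm{im}\,r=\abs{\mc G^{\co}}=N^{\co}$. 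Here $r(\cochain g_{N^{\co}}^{\co})$ furnishes the one remaining basis vector of $H^1(\partial\K,\partial\K^c)$, namely the ``total contact'' class that is obstructed by orientation in the orientable case; routing $\cochain g_{N^{\co}}^{\co}$ through the torsion edge $e^*$ is exactly what produces a genuine real class here rather than the integer torsion element $\torsion c$ of \cref{eq:torsion}. In the orientable branch $\mathrm{im}\,\delta=H^2(\K,\partial\K)=\R$, so $\rank\mathrm{im}\,r=N^{\co}-1=\abs{\mc G^{\co}}$ and the first $N^{\co}-1$ generators already map to a basis of $\mathrm{im}\,r$.

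Finally I would assemble the decomposition. In either branch $r$ carries $\mc G^{\co}$ to a basis of $\mathrm{im}\,r$; hence $\mc G^{\co}$ is linearly independent, $\rank\eqclassspan{\mc G^{\co}}=\abs{\mc G^{\co}}$, and $\eqclassspan{\mc G^{\co}}\cap\ker r=\{0\}$. Since $\ker r=\mathrm{im}\,j^\ast$ and \cref{lem:dimension.b} gives $\rank H^1(\K,\partial\K^c)-\rank H^1(\K,\partial\K)=\abs{\mc G^{\co}}$, a dimension count yields $H^1(\K,\partial\K^c)=\mathrm{im}\,j^\ast\oplus\eqclassspan{\mc G^{\co}}$, and identifying $\mathrm{im}\,j^\ast$ with $H^1(\K,\partial\K)$ (the map $j^\ast$ being injective once every boundary component meets $\partial\K^c$) gives the isomorphism \cref{eq:decomposition.c}. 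I expect the genuine obstacle to be the torsion branch of the third step: pinning down $r(\cochain g_{N^{\co}}^{\co})$ as the total-contact class and tying the appearance of this extra generator to the vanishing of $H^2(\K,\partial\K)$. This is precisely the mechanism by which an integer torsion class is promoted to a genuine real cohomology class, and it is what separates the two cases of \cref{lem:dimension.b}.
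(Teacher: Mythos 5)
Your proposal is correct and proves the right statement, but it routes the central step through a tool the paper's proof does not use: the long exact sequence of the triple $(\K,\partial\K,\partial\K^c)$ and the restriction map $r\colon H^1(\K,\partial\K^c)\to H^1(\partial\K,\partial\K^c)$. The paper argues directly on representatives: each $\cochain g^{\co}_j$ meets each contact component in exactly one edge (the ``single-cell support property''), from which it deduces that no nontrivial combination lies in $B^1(\K,\partial\K^c)$ or comes from $H^1(\K,\partial\K)$; it then closes exactly as you do, with the orientability dichotomy of \cref{lem:homology} and the dimension count of \cref{lem:dimension.b}. Your route buys two things. First, exactness gives $\ker r=\mathrm{im}\,j^\ast$ for free, so once $r$ carries $\mc G^{\co}$ to a basis of $\mathrm{im}\,r$ the splitting $H^1(\K,\partial\K^c)=\mathrm{im}\,j^\ast\oplus\eqclassspan{\mc G^{\co}}$ follows with no dimension count at all---your final appeal to \cref{lem:dimension.b} is actually redundant. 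Second, the computation $\rank \mathrm{im}\,r = N^{\co}-\rank H^2(\K,\partial\K)$ explains structurally why the extra generator appears exactly in the non-orientable case, which in the paper is visible only through the case split on $\abs{E_M^{\rm II}}$. The one claim you should still write out is the linear independence of the classes $r(\cochain g^{\co}_j)$ in $H^1(\partial\K,\partial\K^c)$: identify $H^1(\partial\K,\partial\K^c)\cong\R^{N^{\co}}$ via the signed sums over the contact arcs (well defined on classes because a relative coboundary $\delta^0\cochain c$ sums to zero over each arc, $\cochain c$ vanishing at the arc's endpoints in $\partial\K^c$); in these coordinates $r(\cochain g^{\co}_j)=\pm\epsilon_j\pm\epsilon_{N^{\co}}$ for $j<N^{\co}$ and, in the non-orientable branch, $r(\cochain g^{\co}_{N^{\co}})$ is a nonzero multiple of $\epsilon_{N^{\co}}$, so independence is immediate. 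Finally, your caveat that $j^\ast$ is injective only when every boundary component meets $\partial\K^c$ matches an assumption the paper also makes implicitly (Fact~2 in the proof of \cref{lem:dimension.b} uses $\abs{J^{\co}_k}=N^{c}_k$, which likewise requires every contact to be a proper arc), so you are not assuming more than the paper does.
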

\begin{proof}
Let $E_M^{\mathrm{II}}$ be the set defined in \cref{alg:handles} at line 14, and used in \cref{alg:contacts} at line 9.
We distinguish between the two cases $\abs{E_M^{\mathrm{II}}} = 0$ and $\abs{E_M^{\mathrm{II}}} > 0$.

Assume $\abs{E_M^{\mathrm{II}}} = 0$. Then, according to \cref{alg:contacts}, each $1$-cochain $\cochain g^{\co}_j\in \mc G^{\co}$ is produced by \cref{alg:cocycle} from input pairs $(e,e')$ where $e,e'\in\partial\K$.
Based on \cref{eq:transformation} and \cref{eq:cocycle.face} in \cref{lem:cocycle}, each $\cochain g^{\co}_j$ is a $1$-cocycle that is nonzero on exactly one $1$-cell of the boundary component $\partial\K^{c}_j \subset \partial\K$, a feature that we refer to as the \emph{single-cell support property} for the remainder of the proof.

In particular, no $\cochain g^{\co}_j$ belongs to $B^1(\K,\partial\K^{c})$. Indeed, any $1$-cocycle in $B^1(\K,\partial\K^{c})$ must be nonzero on more than one $1$-cell of $\partial\K^{c}_j$, which would contradict the previous single-cell support property.
Hence, we obtain the inclusion $\eqclassspan{\mc G^{\co}}\subset H^1(\K,\partial\K^{c})$.
Moreover, the single-cell support property implies $\eqclassspan{\mc G^{\co}}\cap H^1(\K,\partial\K)=\{0\}$.

Since the elements of $\mc G^{\co}$ have disjoint, single-cell supports, they are linearly independent.
Using Point~(i) of \cref{lem:homology}, we deduce that $\bar\K$, and consequently $\K$, is orientable ($\bar\K$ is obtained by gluing disks along $\partial\K$; see \cite{gallier2013guide} for a discussion).
As $\abs{\mc G^{\co}} = N^{\co}-1$, applying \cref{lem:dimension.b} then yields
\begin{equation*}
\rank{\eqclassspan{\mc G^{\co}}} = N^{\co} - 1 = \rank H^1(\K,\partial \K^{c}) - \rank H^1(\K, \partial \K),
\end{equation*}
which establishes the decomposition in \cref{eq:decomposition.c}.

On the other hand, if $\abs{E_M^{\mathrm {II}}} > 0$, we consider the $1$-cochains $\cochain g_e^{(1)}$ and $\cochain g_e^{(2)}$ defined in \cref{alg:contacts} at lines 17 and 18.
Applying the same reasoning as in the proof of Point (ii) of \cref{lem:equivalence}, using the paths $\tilde p_{e,e^*}^{(1)}$ and $\dual p_{e,e^*}^{(2)}$ defined at lines 15 and 16, we deduce that both $1$-cochains $\cochain g_e^{(1)}$ and $\cochain g_e^{(2)}$ attain the same value on the $1$-cell $e^*$, chosen at line 13.
This implies that the $1$-cochain $\mathbf g^{\co}_{N^{\co}}$ defined at line 19, is a $1$-cocycle.
Using Point (ii) of \cref{lem:homology}, we deduce that in this case, $\bar \K$, and hence $\K$, must be non-orientable.
Repeating the same reasoning as above for the case $\abs{E_M^{\mathrm {II}}} = 0$ and applying \cref{lem:dimension.b}, we obtain the desired conclusion.
\end{proof}

\subsection{Final step}
To conclude, it is sufficient to combine Lemmas \ref{lem:decomposition.a}, \ref{lem:decomposition.b}, and \ref{lem:decomposition.c}.
\begin{lemma}[Correctness of \cref{alg:global.loops}]
Let $\mc G^{\ha}$, $\mc G^{\ho}$, $\mc G^{\co}$ denote the sets of global loops computed by Algorithms \ref{alg:handles}, \ref{alg:holes}, and \ref{alg:contacts}, respectively.
Then, we have the following isomorphism
\[
H^1(\K, \partial \K^{c}) \cong \eqclassspan{\mc G^{\ha}} \oplus \eqclassspan{\mc G^{\ho}} \oplus \eqclassspan{\mc G^{\co}}.
\]
\end{lemma}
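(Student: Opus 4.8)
The plan is to chain the three decomposition results established above. The first thing I would record is that all three families live in a common ambient group: every element of $\mc G^{\ha}$, $\mc G^{\ho}$ and $\mc G^{\co}$ is a $1$-cocycle of $\K$ whose support contains no edge of $\partial\K^c$, and therefore represents a well-defined class in $H^1(\K,\partial\K^c)$. For $\mc G^{\ha}$ this holds because its elements are $1$-cocycles whose support avoids all of $\partial\K$ (as noted in the discussion preceding \cref{lem:equivalence}); for $\mc G^{\ho}$ because $\cochain g_k^{\ho}=\delta^0\cochain c_k^{\ho}$ is a coboundary that vanishes on every boundary edge; and for $\mc G^{\co}$ because, by \cref{lem:cocycle}, the only boundary edges occurring in the support of $\cochain g_j^{\co}$ lie in $\partial\K^{\co}=\partial\K\setminus\partial\K^c$. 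Consequently $\eqclassspan{\mc G^{\ha}}$, $\eqclassspan{\mc G^{\ho}}$ and $\eqclassspan{\mc G^{\co}}$ are genuine subspaces of $H^1(\K,\partial\K^c)$.

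With this in hand, I would combine the isomorphisms by successive substitution. \cref{lem:decomposition.c} gives $H^1(\K,\partial\K^c)\cong H^1(\K,\partial\K)\oplus\eqclassspan{\mc G^{\co}}$; substituting $H^1(\K,\partial\K)\cong H^1(\bar\K)\oplus\eqclassspan{\mc G^{\ho}}$ from \cref{lem:decomposition.b}, and then the identification $H^1(\bar\K)\cong\eqclassspan{\mc G^{\ha}}$ from \cref{lem:decomposition.a}, yields
\begin{equation*}
H^1(\K,\partial\K^c)\cong\eqclassspan{\mc G^{\ha}}\oplus\eqclassspan{\mc G^{\ho}}\oplus\eqclassspan{\mc G^{\co}},
\end{equation*}
and in particular the three spans have dimensions summing to $\rank H^1(\K,\partial\K^c)$.

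The one point requiring care — the main, if modest, obstacle — is that this abstract isomorphism should be read as an \emph{internal} direct sum inside $H^1(\K,\partial\K^c)$, which is exactly what makes $\mc G=\mc G^{\ha}\sqcup\mc G^{\ho}\sqcup\mc G^{\co}$ an honest basis. I would assemble this from the internal structures already produced by the three lemmas. \cref{lem:decomposition.a} identifies $\eqclassspan{\mc G^{\ha}}$ with $H^1(\bar\K)$; \cref{lem:decomposition.b} exhibits $\eqclassspan{\mc G^{\ha}}$ and $\eqclassspan{\mc G^{\ho}}$ as complementary subspaces of $H^1(\K,\partial\K)$ (using $\eqclassspan{\mc G^{\ho}}\cap H^1(\bar\K)=\{0\}$ together with the dimension count); and \cref{lem:decomposition.c} exhibits $H^1(\K,\partial\K)$, injected via the cochain inclusion $C^1(\K,\partial\K)\subset C^1(\K,\partial\K^c)$ (valid since $\partial\K^c\subset\partial\K$), together with $\eqclassspan{\mc G^{\co}}$, as complementary subspaces of $H^1(\K,\partial\K^c)$. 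Transporting the first decomposition forward along the injection of the second and adjoining $\eqclassspan{\mc G^{\co}}$ then gives the internal direct sum of the three spans. Finally, since their dimensions sum to exactly $\rank H^1(\K,\partial\K^c)$, linear independence of $\mc G^{\ha}\cup\mc G^{\ho}\cup\mc G^{\co}$ is equivalent to its spanning $H^1(\K,\partial\K^c)$, so verifying either one closes the argument.
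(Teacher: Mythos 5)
Your proposal is correct and follows exactly the paper's route: the paper's entire argument for this lemma is the single sentence that it suffices to combine Lemmas \ref{lem:decomposition.a}, \ref{lem:decomposition.b}, and \ref{lem:decomposition.c}, which is precisely the chaining of isomorphisms you carry out. Your additional care in checking that all three families represent classes in the common ambient group $H^1(\K,\partial\K^c)$ and that the abstract isomorphism is realized as an internal direct sum is a welcome elaboration of a step the paper leaves implicit, not a departure from its method.
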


\section{Application to BEM discretization of the EFIE}
\label{sec:EFIE}
In the previous sections, we established a rigorous algorithmic framework for computing a set of cohomology generators of $H^1(\K, \partial \K^c)$ for a generic CW complex.
In this section, we place this mathematical result in the context of the computational electromagnetics application mentioned in the introduction.
Specifically, we show how these cohomology generators, referred to in engineering literature as \emph{global loops}, appear naturally in the discretization of the EFIE for conducting surfaces with contacts.
\subsection{EFIE modeling of screens with contacts}
Let $\Gamma \subset \mathbb{R}^3$ be a compact surface representing the computational domain of a thin conducting body (a \emph{screen}).
We assume $\Gamma$ is a $2$-manifold with boundary, satisfying the Lipschitz regularity conditions detailed in \cite[Section 1.1]{buffa2003electric}.
Note that if the physical model includes $2$-dimensional contact regions (patches), $\Gamma$ is defined as the surface obtained after removing the interior of these patches (see, e.g., \cite[Section~II]{miano2005surface}).

An induced surface current density $\bm{J}$ flows on $\Gamma$, driven either by an incident electromagnetic field or by external voltage sources.
To model these connections, the boundary $\partial \Gamma$ is partitioned into two disjoint subsets as
\begin{equation}
\partial \Gamma = \partial \Gamma^{c} \cup \partial \Gamma^{\co}.
\end{equation}
Here, $\partial \Gamma^{\co}$ represents the union of $N^{\co}$ disjoint connected components (the \emph{contacts} or \emph{ports}).
These components may correspond either to the boundaries of removed patches (interior ports) or to specific subsets of the original boundary (edge ports).
The remaining part, $\partial \Gamma^{c}$, represents the insulating boundary.
See \cref{fig:proto} for an illustration.
\begin{figure}[H]
\includegraphics[width=0.45\textwidth,center]{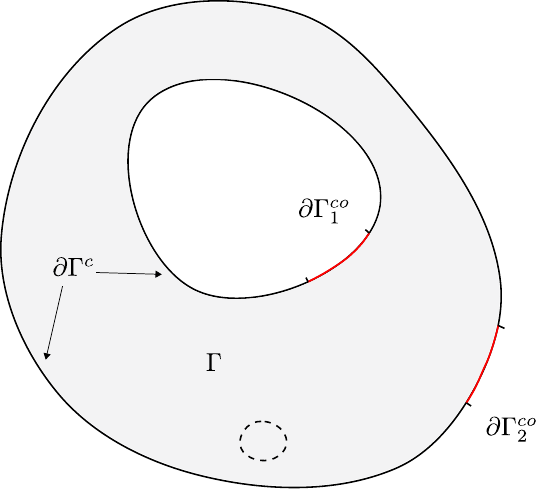}
\caption{
A screen $\Gamma$ whose boundary $\partial \Gamma$ decomposes into two connected components.
In red, the (open) subset $\partial \Gamma^{co}$ of $\partial \Gamma$ represents the contact region, and it decomposes into two connected components.
The dashed line indicates an example of a possible interior port,
whose associated patch must then be removed from $\Gamma$.
}
\label{fig:proto}
\end{figure}

The physical boundary conditions for the current density $\bm{J}$ differ on these two subsets.
Specifically, on the insulating boundary $\partial \Gamma^{c}$, no current can flow out of the surface, imposing the condition
\begin{equation}
\bm J \cdot \bm n_{\partial \Gamma} = 0 \quad \text{on} \quad \partial \Gamma^{c},
\label{eq:bc.partial}
\end{equation}
where $\bm n_{\partial \Gamma}$ is the unit outer normal tangent to $\Gamma$.
Conversely, on the contact interfaces $\partial \Gamma^{\co}$, current is allowed to flow in and out to connect with the external circuit.

\subsection{Formal definition of global loops}
To solve the EFIE numerically, we introduce a triangulation $\K$ covering the computational domain $\Gamma$.
The standard Galerkin BEM seeks the solution $\bm{J}$ in the finite element space $W^1_h$ spanned by Raviart-Thomas basis functions, subject to the boundary condition \cref{eq:bc.partial} on $\partial \K^c$.

The connection to the cochain framework is established via the \emph{de Rham map} \cite{arnold2018finite,dodziuk1976finite}.
This map provides a structural isomorphism between the finite element space $W^1_h$ and the space of relative $1$-cochains $C^1(\K, \partial \K^c; \mathbb{R})$.
Under this mapping:
\begin{itemize}
    \item The action of the surface divergence operator $\div_\Gamma$ corresponds to that of the coboundary operator $\delta^1$.
\item The subspace of divergence-free currents $\{\bm J \in W^1_h \mid \div_\Gamma \bm J = 0\}$ is isomorphic to the space of relative $1$-cocycles $Z^1(\K, \partial \K^c; \mathbb{R})$.
\item The subspace $\mathrm{Im} \, \curl_\Gamma$ is isomorphic to $B^1(\K, \partial \K^c; \mathbb{R})$.
\end{itemize}
Therefore, the space of solenoidal vector fields $\bm g$ required for the decomposition in \cref{eq:quasi-Helmholtz} is exactly isomorphic to the first relative cohomology group $H^1(\K, \partial \K^c; \R)$.

This physical reasoning leads directly to the formal definition used throughout this paper.
\begin{definition}[Definition of global loops]
\label{def:global.loops}
The set $\mc G = \{\cochain g_i \}_{i=1}^{\beta_1}$ of \emph{global loops} on the screen $\Gamma$ is defined as a set of representatives for a basis of the first relative cohomology group $H^1(\K, \partial \K^c; \mathbb{R})$.
Here, $\K$ is the triangulation of the computational domain $\Gamma$ (excluding interior ports), and $\partial \K^c$ is the subcomplex of the boundary where the insulation condition \cref{eq:bc.partial} holds.
\end{definition}
This definition clarifies that finding ``global loops'' is not merely a heuristic geometric search, but a rigorous algebraic task: identifying generators for the relative cohomology space $H^1(\K, \partial \K^c; \R)$.
The algorithm proposed in \cref{sec:algorithms} is specifically designed to 
address this task efficiently, addressing the complex topological configurations that arise in real-world engineering applications.
\section{Conclusions}

In this final section, we provide an overview of the algorithm's key features, its impact on engineering applications, and potential extensions that could guide future work.

We first emphasize that the algorithm's foundation in Discrete Morse Theory ensures its broad applicability.
The algorithms presented here are effective as long as an \emph{optimal discrete Morse function} can be computed.
For example, such functions can be determined even for pathological cases, such as surfaces connected by a single vertex or edge \cite{Lewiner2003OptimalDM}.
Additionally, these ideas can extend to handle \emph{non-manifold surfaces} (e.g., screens with junction points, as discussed in \cite{claeys2013integral,claeys2016integral}).
We propose that slight modifications to our algorithms could extend their applicability to these cases while maintaining linear worst-case complexity.
For instance, identifying manifold parts can be achieved in linear time by analyzing the coboundary elements of each edge.
By orienting a random polygon in each manifold part and iteratively orienting others using a \emph{breadth-first search} \cite{cormen2022introduction}, we expect the algorithm's linear time complexity to remain intact.

Another consideration is that the algorithm does not aim to produce a minimal or shortest set of cohomology generators.
This means that the generated set of cohomology generators may not have the most compact support (e.g., minimal length or the smallest number of edges).
For further insights, readers may consult Erickson et al. \cite{erickson2005greedy} for homology and D\l{}otko et al.
\cite{dlotko2014lazy} for cohomology, which are particularly relevant to our applications.
Achieving a truly minimal basis is often computationally prohibitive, necessitating a trade-off between runtime and the quality of the basis obtained.

Regarding the impact on engineering applications, the first aspect to highlight is the algorithm's ability to automatically compute global loops for surfaces, both with and without boundaries, and for orientable as well as non-orientable surfaces.
The primary algorithm, presented in \cref{sec:main.algo}, utilizes a three-step procedure to partition global loops into distinct disjoint classes.
Informally, these classes correspond to the presence of ``handles,'' ``holes,'' or contact regions.
The algorithm handles all these cases automatically, adapting to each situation to produce the correct output.
This is a crucial feature for engineering applications where manually verifying the explicit form and count of global loops is challenging.
Furthermore, this reliability is underpinned by the robust theoretical analysis provided earlier in the paper.

For example, consider the case where $\Gamma$ is a triangulated M\"obius strip. Our algorithm outputs no global loop, understood as a generator of $ H^1(\Gamma, \partial \Gamma) $. This result is easily validated theoretically via \cref{lem:decomposition.b}. Since the M\"obius strip has only one connected component, $ N^{\ho} = 1 $, it follows that $ H^1(\K, \partial \K) \cong H^1(\bar \K) $.
However, $ \bar \K $, obtained by gluing a disk to the M\"obius strip, is homeomorphic to a \emph{projective plane} (cf. \cite[pg.\ 19]{gallier2013guide}), and from the homological characterization of the projective plane, we know that $ H^1(\bar \K) \cong 0 $.

This observation corrects an error in \cite{hofmann2023low}, where it is claimed that the M\"obius strip supports a global loop (see the caption of Fig. 2 in \cite{hofmann2023low}).
This figure refers to their proposed procedure for computing global loops of the surface under consideration.
To avoid such pitfalls and ensure provably correct results, the algorithm proposed in this work is therefore explicitly required.

The second crucial practical aspect of the algorithm is its linear time worst-case complexity.
Specifically, the algorithms described in \cref{sec:algorithms} require only a list-based representation of a triangulation $\K$ of the surface $\Gamma$.
In this representation, each cell in $\K$ has an associated data structure that stores its neighboring cells.
In practical applications, it is generally assumed that the number of neighboring cells for each cell in $\K$ is bounded by a constant, reflecting common mesh regularity assumptions in boundary element methods.
Considering this constant, the use of standard graph algorithms for spanning trees results in a worst-case linear time complexity \cite{cormen2022introduction}.
Additionally, all algorithms in Section \ref{sec:algorithms} are entirely graph-based, which makes them straightforward to implement without requiring external libraries or specialized procedures.

\section*{Acknowledgements}
The author acknowledges the funding provided by the European Union through the MSCA EffECT, project number 101146324.

{\footnotesize
\bibliography{references}
}
\bibliographystyle{ieeetr}

\end{document}